\documentclass[11pt, reqno]{amsart}
\usepackage{diagbox}
\usepackage{color}
\usepackage{amsmath}
\usepackage{latexsym,amssymb}
\usepackage[mathscr]{euscript}
\usepackage{graphics,graphicx,subfig,float}
\usepackage{stmaryrd}
\usepackage{bm}
\usepackage{enumerate}
\usepackage{setspace}
\newtheorem{thm}{Theorem}[section]
\newtheorem{proposition}{Proposition}[section]

\newtheorem{remark}{\bf Remark}[section]
\newtheorem{lemma}{\bf Lemma}[section]

\numberwithin{equation}{section}
\newtheorem{assumption}{Assumption}[section]


\textwidth=6in
\textheight=8.5in
\parindent=16pt
\oddsidemargin=0.15in
\evensidemargin=0.15in
\topmargin=0.15in


\begin{document}
		\baselineskip=17pt
	\title{OPTIMAL CONTROL FOR PRODUCTION INVENTORY SYSTEM WITH VARIOUS COST CRITERION}

		\author[Subrata Golui]{Subrata Golui}
	\address{Department of Mathematics\\
		Indian Institute of Technology Guwahati\\
		Guwahati, Assam, India}
	\email{golui@iitg.ac.in}
	
	\author[Chandan Pal]{Chandan Pal}
	\address{Department of Mathematics\\
		Indian Institute of Technology Guwahati\\
		Guwahati, Assam, India}
	\email{cpal@iitg.ac.in}

\author[Manikandan, R.]{Manikandan, R.}
\address{Department of Mathematics\\
Central University of Kerala\\
Kasaragod, Kerala, India}
\email{mani@cukerala.ac.in}

\author[Abhay Sobhanan]{Abhay Sobhanan}
\address{Department of Industrial and Management Systems Engineering\\
University of South Florida\\
Tampa, FL, 33620, USA}
\email{sobhanan@usf.edu}
	\date{}
	
		\maketitle \baselineskip17pt
	\parskip10pt
	\parindent.4in
	\begin{abstract}
		\vspace{2mm}
		\noindent 
		In this article, we investigate a dynamic control problem of a production-inventory system. Here, demands arrive at the production unit according to a Poisson process and are processed in an FCFS manner. The processing time of the customer’s demand is exponentially distributed. The production manufacturers produce the items on a make-to-order basis to meet customer demands. The production is run until the inventory level becomes sufficiently large. We assume that the production time of an item follows exponential distribution and the amount of time for the produced item to reach the retail shop is negligible. Also, we assume that no new customer joins the queue when there is a void inventory. This yields an explicit product-form solution for the steady-state probability vector of the system. The optimal policy that minimizes the discounted/average/pathwise average total cost per production is derived using a Markov decision process approach. We find optimal policy using value/policy iteration algorithms. Numerical examples are discussed to verify the proposed algorithms.
	\end{abstract}

	\maketitle
	\noindent {\bf Key Words:} Production inventory system, controlled Markov chain, cost criterion, value iteration algorithm, policy iteration algorithm.
	
	\noindent {\bf Mathematics Subject Classification}:  Primary 93E20, Secondary 49L20, 60J27.
	\section{Introduction}
	Inventory theory has useful applications in various day-to-day real-life scenarios. One such application is production control, in which decision-makers focus on controlling costs while satisfying customer demands and maintaining their goodwill. Over the last decade research on complex integrated production-inventory systems or service-inventory systems has found much attention, often in connection with the research on integrated supply chain management, see He et al. (2002); He and Jewkes (2000); Helmes et al. (2015); Krishnamoorthy et al. (2015); Krishnamoorthy and Narayanan (2013); Malini and Shajin (2020); Pal et al. (2012);  Sarkar (2012); Veatch and Wein (1994). In these articles the authors considered $(s,S)/(s,Q)$-{type} policy to study their inventory models.
	
	Sigman and Simchi-Levi (1992) and  Melikov and Molchanov (1992) introduced the integrated queueing-inventory models. Whereas the article by Sigman and Simchi-Levi (1992), considered the Poisson arrival of demands, arbitrarily distributed service time, and exponentially distributed replenishment lead time. Also, they showed that the resulting queueing-inventory system is stable if and only if the service rate is higher than the customer arrival rate. The authors considered that the customers may join the system even when the inventory level is zero and discussed the case of non-exponential lead-time distribution. Berman et al. (1993) followed them with deterministic service times and formulated the model as a dynamic programming problem. For more inventory models with positive service times, see Berman and Kim (1999), (2004); Arivarignan et al. (2002); Krishnamoorthy et al. (2006a), (2006b); for a recent extensive survey of literature, we refer in Krishnamoorthy et al. (2021), it provides the summary of work done until 2019.  
	
	We recall the remarkable work by Schwarz et al. (2006). They propose product form solutions for the system state distribution under the assumption that customers do not join when the inventory level is zero, where the service/lead time is exponentially distributed and demands follow a Poisson distribution. Krishnamoorthy and Narayanan (2013) reduced the Schwarz et al. (2006) model to a production inventory system with single-batch bulk production of the quantum of inventory required. The production inventory with service time and protection for a few of the final phases of production and service is discussed in Sajeev (2012).
	
	Saffari et al. (2011) considered an M/M/1 queue with inventoried items for service, where the control policy followed is $(s, Q)$ and the lead time is a mixed exponential distribution. They assumed that when inventory stock is empty, fresh arrivals are lost to the system, and thus, they obtain a product form solution for the system state probability. Inventory system with queueing networks was studied by Schwarz et al. (2007). The authors assumed that at each service station, an order for replenishment is made when the inventory level at that station drops to its reorder level; hence, no customer is lost to the system.  Zhao and Lian (2011) used  dynamic programming to obtain the necessary and sufficient conditions for a priority queueing inventory system to be stable. 
	
	In all the papers quoted above, customers are provided with an item from the inventory after their completion of service. In Krishnamoorthy et al. (2015), customers may not get an inventory after their completion of service. They studied the optimization problem and obtained the optimal pairs $(s,S)$ and $(s, Q)$ corresponding to the expected minimum costs.
	
	In this study, we do not use any common inventory control policies such as $(s,S)/ (s,Q)$-{type}. We consider the problem of finding the optimal production rates for a discounted/long-run average/pathwise average cost criterion of the production inventory system. Here, we consider an $M/M/1/ \infty$ production inventory system with positive service time. Customers’ demands arrive one at a time according to a Poisson processes. Service and production times follow an exponential distribution. Each production is $1$ units, and the production process is run until the inventory level becomes sufficiently large (infinity). It is assumed that the amount of time for the item produced to reach the retail shop is negligible. 
	We assume that no customer joins a queue when the inventory level is zero. This assumption leads to an explicit product-form solution for the steady-state probability vector using a simple approach. In this paper, we have applied matrix analytic methods for finding system steady-state equations. Readers are referred to Neuts (1989), (1994), Chakravarthy and Alfa (1986) and Chakravarthy (2022a and 2022b). 
	
	In this paper, we find an optimal stationary policy by policy/value iteration algorithm. We see that there are many studies on inventory production control theory on continuous-time controlled Markov decision processes (CTCMDPs) for discounted/ average/ pathwise average cost criteria (see Federgruen and Zipkin (1986), (1986); Helmes et al. (2015)). However, the  articles discussed on algorithms for finding an optimal stationary policy are, Federgruen and Zhang (1992); He et al. (2002); He and Jewkes (2000). The fixed costs of ordering items or setting up a production process arise in many real-life scenarios. In their presence, the most widely used ordering policy in the stochastic inventory literature is the $(s,S)$ policy. In this context, we mention two important survey papers for discrete/continuous-time regarding $(s,S)$ replenishment policy: Perera and Sethi (2022a, 2022b). They comprehensively surveyed the vast literature accumulated over seven decades in these two papers for the discounted/average cost criterion on discrete/continuous-time.
	
	The motivation for studying discounted problems comes mainly from economics. For instance, if $\delta$ denotes a rate of discount, then $(1 +\delta)L$
	would be the amount of money one would have to pay to obtain a loan
	of $L$ dollars over a single period. Similarly, the value of a note promising to
	pay $L$ dollars $t$ time steps into the future would have a present value of
	$\frac{L}{(1+\delta)^t}=\alpha^tL$, where $\alpha:=(1+\delta)^{-1}$ denotes the discount factor.
	This is the case for finite-horizon problems. But in some cases,
	for instance, processes of capital accumulation for an economy, or some
	problems on inventory or portfolio management, do not necessarily have a
	natural stopping time in the definable future, see Hern$\acute{\rm a}$ndez-Lerma and Lasserre (1996); Puterman (1994).
	Now when decisions are made frequently, so that the discount rate is very close to 1, or
	when performance criterion cannot easily be described in economic terms, the
	decision maker may prefer to compare policies on the basis of their average expected
	reward instead of their expected total discounted reward, see Piunovsky and Zhang (2020).\\
	
	The ergodic problem for controlled Markov processes refers to the problem of minimizing the time-average cost over an infinite time horizon. Hence, the cost over any finite initial time segment does not affect ergodic cost. This makes the analytical analysis of ergodic problems more difficult. However, the sample-path cost $r(\cdot,\cdot,\cdot)$, defined by \eqref{cost}, corresponding to an average-cost optimal policy that minimizes the expected average cost may fluctuate from its expected value. To take these fluctuations into account, we next consider the pathwise average-reward (PAC) criterion.
	In this study, we investigate the production inventory control problem for the discounted/average/pathwise average cost criterion. We find the optimal production rate through a value/policy iteration algorithm. However, there may be some issues in obtaining an optimal policy. Hence, in this study, we also examine an $\varepsilon$-optimal policy. Finally, numerical examples are included to verify the proposed algorithms. 
	
The remainder of this paper is organized as follows. First, we define the production control problem in section 2. In Section 3, we discuss the steady-state analysis of this model and describe the evaluation of the control system. In addition, we define our cost criterion and assumptions required to obtain an optimal policy. Section 4 discusses the discount cost criterion. Here, we find a solution for the optimality equation corresponding to the discounted cost criterion, and provide its value/policy iteration algorithms. In the next section, we deal with the optimality equation and policy iteration algorithm corresponding to the average cost criterion. We perform the same analysis in Section 6 for the pathwise average cost criterion, as in Section 5. Finally, in Section 7, we provide concluding remarks and highlight the directions for future research.
	
	\noindent \textbf{Notations:}\\
	\noindent $\mathcal{N}(t)$: number of customers in the system at time $t$.\\
	$\mathcal{I}(t)$: inventory level in the system at time $t$.\\
	$e: (1, 1, \cdots, 1,\cdots)$  a column vector of $1^{'}s$ of appropriate order.\\
	(LI)QBD: (Level independent) Quasi birth and death process.\\
	$\mathbb{N}_0=\mathbb{N}\cup \{0\}$, where $\mathbb{N}$ is set of all natural numbers.\\
	$C_b(\mathbb{N}_0\times \mathbb{N}_0)$ is the collection of all bounded functions on $\mathbb{N}_0\times \mathbb{N}_0$.
	
	\section{Problem Description}
	\subsection{Production inventory model}
	We consider an $M/M/1/ \infty$ production inventory system with positive service time. Demands by customers for the item occur according to a Poisson process of rate $\lambda$. Processing of the customer request requires a
	random amount of time, which is exponentially distributed with parameter $\mu$. Each production is of $1$ unit and the production process is keep \textit{{run}} until inventory level becomes sufficiently large (infinity). To produce an item it takes an amount of time which is exponentially distributed with parameter $\beta$. We assume that no customer is allowed to join the queue when the inventory level is zero; such demands are
	considered as lost. It is assumed that the amount of time for the item
	produced to reach the retail shop is negligible. Thus the system is a continuous-time Markov chain (CTMC) $\left\{\mathcal{X}(t);t\geq 0\right\}=\left\{\left(\mathcal{N}(t),\mathcal{I}(t)\right); t\geq 0\right\}$ with state space \mbox{\boldmath {{$\Omega$}}}$=\bigcup \limits_{n = 0}^\infty  {\mathcal{L}(n)},$ where $\mathcal{L}(n)$ is called the $n^{th}$ level of the CTMC, is given by, $\left\{(n,i); i \in \mathbb{N}_0\right\}.$
	
Now the transition rates in the CTMC are:
\begin{itemize}
	\item  $ (n,i)\rightarrow(n+1,i)$   \quad \quad \qquad : rate is $\lambda$, \quad $n\in \mathbb{N}_0$, $i\in \mathbb{N}$
	\item  $ (n,i)\rightarrow(n-1,i-1)$ \quad \qquad: rate is $\mu$, \quad $n,i\in \mathbb{N}$
	\item $ (n,i)\rightarrow(n,i+1)$    \quad \qquad \quad : rate is $\beta$, \quad $n,i\in \mathbb{N}_0.$
	\item All other transition rates are zero.
	\end{itemize} 
Write, \begin{eqnarray*}
		P\{N(t)=n, I(t)=i\} = P_{n, i}(t).
	\end{eqnarray*}
~\\These satisfies the system of difference-differential equations:
	\begin{eqnarray}
	P^{'}_{n,0}(t) = - \beta P_{n,0}(t) + \mu P_{n+1,1}(t) , \quad n\in \mathbb{N}_0 &\\
	P^{'}_{n,i}(t) = - (\lambda+\beta+\mu) P_{n,i}(t) + \mu P_{n+1,i+1}(t) + \lambda P_{n-1,i-1}(t) + \beta P_{n, i+1},\quad n,i \in \mathbb{N}.
\end{eqnarray}
~\\ The steady-state time derivative is equated to zero under the condition for its existence is $\lambda<\mu$, which will be proved in the subsequent Lemma 3.1.\\ Write, 
\begin{eqnarray*}
 	\lim_{t\rightarrow\infty} P_{n, i}(t)= P_{n, i}, \quad n,i \in \mathbb{N}.
 \end{eqnarray*}
Thus, the above set of equations (2.1) and (2.2) becomes,
	\begin{eqnarray}
	-\beta P_{n,0}+ \mu P_{n+1,1} =0, \quad n \in \mathbb{N}_0 \quad & \\
	-(\lambda+\beta+\mu)P_{n,i} + \mu P_{n+1,i+1} + \lambda P_{n-1,i-1} + \beta P_{n, i+1} = 0, \quad n, i \in \mathbb{N}.
\end{eqnarray}

We can solve these equations to find the steady-state solution, by using the Matrix-Analytic Method.

\begin{figure}[H]
		\centering
		\includegraphics[width=0.9\textwidth]{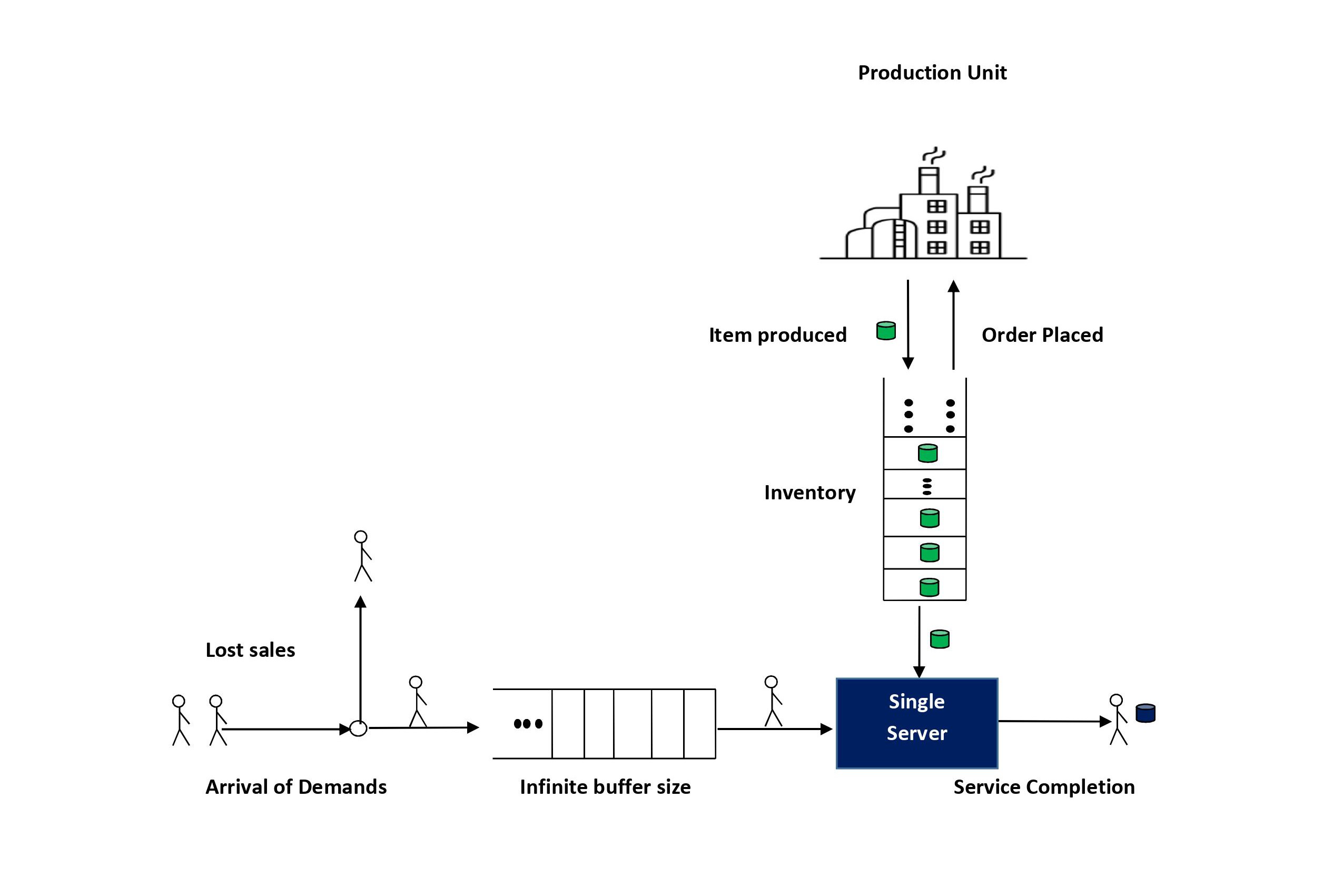}
		\caption{Dynamic production inventory system.}
		\label{figure0}
	\end{figure}

	The infinitesimal generator of this CTMC $\left\{\mathcal{X}(t);t\geq 0\right\}$ is\\  
	\begin{equation} \label{eq:1}
	\mbox{\boldmath {{$\mathcal{Q}$}}}=
	\left[ {\begin{array}{*{20}c}
		{B } & {A_0 } & {} & {} & {} & {}  \\
		{A_2 } & {A_1 } & {A_0 } & {} & {} & {}  \\
		{} & {} {A_2 } & {A_1 } & {A_0 } & \dots  \\
		{} & {} & {} {\ddots} & {\ddots} & {\ddots}  \\
		\end{array}} \right],
	\end{equation}
	where $B$ contains transition rates within $\mathcal{L}(0)$; $A_0$ is the arrival matrix that represents the transition rates of customer arrival i.e., $A_0$ represents the transition from level $n$ to level $n+1,$ for any $ n \in \mathbb{N}_0$; $A_1$ represents the transitions within $\mathcal{L}(n)$ for any $n \in \mathbb{N}$ and $A_2$ is the service matrix that represents the transition rates of service times i.e., $A_2$ represents transitions from $\mathcal{L}(n)$ to $\mathcal{L}(n-1), \ n \in \mathbb{N}.$ The transition rates are
	$$
	\left[B\right]_{kl} =\left \{
	\begin{array}{ll}
	-\beta,             & \textrm{for  }l= k = 0,   \\
	-(\lambda+\beta),    & \textrm{for  }l = k ; \ k=1, 2, ..., \infty,\\
	\beta,               & \textrm{for  }l = k+1; \ k=0, 1, ..., \infty,\\
	0,                  & \textrm{otherwise,}\\
	\end{array}
	\right.
	$$
	$$
	\left[A_{0}\right]_{kl} =\left \{
	\begin{array}{ll}
	\lambda,            & \textrm{for  }l = k ; \ k=1, 2, ..., \infty,\\
	0,            & \textrm{otherwise,}\\
	\end{array}
	\right.
	$$
	$$
	\left[A_{1}\right]_{kl} =\left \{
	\begin{array}{ll}
	-\beta,                 & \textrm{for  }l = k = 0,   \\
	-(\lambda+\beta+\mu),    & \textrm{for  }l = k; \ k = 1, 2, ...,\infty, \\
	\beta,               & \textrm{for  }l = k+1; \ k = 0,1,..., \infty,\\
	0,            & \textrm{otherwise,}\\
	\end{array}
	\right.
	$$
	$$
	\left[A_{2}\right]_{kl} =\left \{
	\begin{array}{ll}
	\mu,             & \textrm{for  }l = k-1 ; \ k = 1, 2, ..., \infty,\\
	0,            & \textrm{otherwise.}\\
	\end{array}
	\right.
	$$
	All other remaining transition rates are zero.\\
	Note: All entries (block matrices) in \mbox{\boldmath {{$\mathcal{Q}$}}} have infinite order, and these matrices contain transition rates within level (in the case of diagonal entries) and between levels (in the case of off diagonal entries).

	\section{Analysis of the system} \label{sc3}
	
	In this section we carry out the steady-state analysis of the production inventory model under study by first establishing the stability condition of the system. Define  \mbox{\boldmath {{${A}$}}}=$A_0+A_1+A_2$. This is the infinitesimal generator of the infinite state CTMC corresponding to the inventory level $\left\{0, 1, \dots, \infty \right\}$. Let {\boldmath$\varphi$} denote the steady-state probability vector of \mbox{\boldmath {{${A}$}}}. That is \mbox{\boldmath{{$\varphi$}}} satisfies  
	\begin{equation} \label{eq:2}
	\mbox{\boldmath{{$\varphi A$}}}=0,\ \mbox{\boldmath {{$\varphi$}}\textbf{e}}=1, ~\text{where}~ e~\text{is a column vector of ones}.
	\end{equation}
	Write $$\mbox{\boldmath{{$\varphi$}}}=\left(\varphi_{0}, \varphi_{1}, \varphi_{2}, \dots \right).$$
	We have
	\begin{center} 
		\mbox{\boldmath {{${A}$}}} =$
		\bordermatrix{& & &    &  &  & & \cr
			&-\beta    &\beta &  &   &   &   \cr
			& \mu&-(\mu+\beta) &\beta   &       \cr 
			& &\mu&-(\mu+\beta) & \beta    &     \cr
			&&&\ddots&\ddots &\ddots &     \cr       
		}
		$.
	\end{center}
	Then using \eqref{eq:2} we get the components of the probability vector \mbox{\boldmath {{$\varphi$}}} (note that, $\beta< \mu$)  explicitly as: 
	$$\varphi_{n}={\left( 1-{\frac{{ \beta }}{{ \mu }}} \right) }{\left({\frac{{\beta}}{{ \mu }}} \right)^{n} }, n=0,1,\dots, \infty.$$
	Since the Markov chain under study is an level independent quasi Birth-Death (LIQBD) process, it is stable if and only if the left drift rate exceeds the right drift rate. 
	That is,  
	\begin{equation} \label{ep2}
	\mbox{\boldmath {{$\varphi$}}} A_0\textbf{e}<\mbox{\boldmath {{$\varphi$}}} A_2\textbf{e}.
	\end{equation}
	We have the following lemma:
	\begin{lemma}
		The stability condition of the production inventory model is given by $\lambda<\mu$.
	\end{lemma}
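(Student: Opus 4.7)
The plan is to verify the classical Neuts stability criterion for level-independent quasi-birth-death processes, which states that the QBD is positive recurrent if and only if the drift inequality \eqref{ep2} holds. Since the generator $\mathcal{Q}$ has already been exhibited in LIQBD form and the stationary vector $\boldsymbol{\varphi}$ of $A=A_0+A_1+A_2$ has been computed as $\varphi_n=(1-\beta/\mu)(\beta/\mu)^n$ for $n\in\mathbb{N}_0$, the task reduces to evaluating the two scalars $\boldsymbol{\varphi}A_0\mathbf{e}$ and $\boldsymbol{\varphi}A_2\mathbf{e}$ and simplifying.

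First, I would read off the row sums of $A_0$ and $A_2$ from the block definitions given earlier. Because $A_0$ is diagonal with $\lambda$ in every diagonal entry except the $(0,0)$ one, the column vector $A_0\mathbf{e}$ has a zero in position $0$ and $\lambda$ in every other position. Similarly, $A_2$ has $\mu$ just below the diagonal starting at row $1$, so $A_2\mathbf{e}$ has a zero in position $0$ and $\mu$ in every other position. This already exposes the fact that both drifts are zero when the inventory is empty, consistent with the modelling assumption that no customer joins when $\mathcal{I}=0$.

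Next, I would plug the geometric components of $\boldsymbol{\varphi}$ into these inner products. A direct computation gives
\begin{equation*}
\boldsymbol{\varphi}A_0\mathbf{e}=\lambda\sum_{k=1}^{\infty}\varphi_k=\lambda(1-\varphi_0)=\lambda\cdot\frac{\beta}{\mu},\qquad
\boldsymbol{\varphi}A_2\mathbf{e}=\mu\sum_{k=1}^{\infty}\varphi_k=\mu(1-\varphi_0)=\beta.
\end{equation*}
The drift inequality $\boldsymbol{\varphi}A_0\mathbf{e}<\boldsymbol{\varphi}A_2\mathbf{e}$ therefore becomes $\lambda\beta/\mu<\beta$, which after cancelling the positive factor $\beta$ collapses to the announced condition $\lambda<\mu$.

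There is no real obstacle here since the sums are geometric and converge immediately under the standing assumption $\beta<\mu$ needed for $\boldsymbol{\varphi}$ to exist. The only point I would flag explicitly is that the zero first entries of $A_0\mathbf{e}$ and $A_2\mathbf{e}$ cause the terms involving $\varphi_0$ to drop out of both sums, which is why the final inequality is independent of $\beta$ even though $\boldsymbol{\varphi}$ depends on it. I would conclude by invoking Neuts' theorem to turn this algebraic drift inequality into genuine positive recurrence of the QBD $\{\mathcal{X}(t)\}$.
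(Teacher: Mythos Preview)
Your proposal is correct and follows exactly the same approach as the paper, which simply invokes Neuts' drift criterion $\boldsymbol{\varphi}A_0\mathbf{e}<\boldsymbol{\varphi}A_2\mathbf{e}$ and then states that ``with a bit of computation, this simplifies to the result $\lambda<\mu$.'' You have merely made that computation explicit, correctly obtaining $\boldsymbol{\varphi}A_0\mathbf{e}=\lambda\beta/\mu$ and $\boldsymbol{\varphi}A_2\mathbf{e}=\beta$, and your observation about why the condition is independent of $\beta$ is a nice clarifying remark that the paper omits.
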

	\begin{proof}
		
		From the well known result in Neuts (1994) on the positive recurrence of $A$, we have\\ 
		\mbox{\boldmath {{$\varphi$}}} $A_0\textbf{e}< \mbox{\boldmath {{$\varphi$}}} A_2\textbf{e}.$
		With a bit of computation, this simplifies to the result $\lambda<\mu$. 
	\end{proof}
	For future reference we define $\rho$ as 
	\begin{equation} \label{ep3}
	\rho=\frac{\lambda}{\mu}.
	\end{equation}

	\subsection{Steady-state analysis}
	For computing the invariant measure of the process $\{\mathcal{X}(t);t\geq0\}$, we first consider a production inventory system with negligible service time where no backlog of customers is allowed (that is when inventory level is zero, no demand joins the system). The rest of the assumptions such as those on the arrival process and lead time are the same as given earlier. Designate the Markov chain so obtained as $\{\widetilde{\mathcal{X}}(t);t\geq 0\}$=$\{\left(\mathcal{I}(t)\right);t\geq0\}$. Its infinitesimal generator \mbox{\boldmath {{$\widetilde{\mathcal{Q}}$}}} is given by,\\
	
	\begin{center}
		\mbox{\boldmath {{$\widetilde{\mathcal{Q}}$}}} =$
		\bordermatrix{& & &    &  &  & & \cr
			&-\beta   &\beta &  &   &   &   \cr
			& \lambda&-(\lambda+\beta) & \beta   &       \cr 
			& &\lambda&-(\lambda+\beta) & \beta   &     \cr
			&&&\ddots&\ddots &\ddots &     \cr       
		}
		$.
	\end{center}  
	Let \mbox{\boldmath {{$\pi$}}}=$(\pi_{0}, \pi_{1},\pi_{2}, \dots)$ be the invariant measure of the process $\widetilde{\mathcal{X}}(t)$=$\{\mathcal{I}(t);t\geq0\}$. Then \mbox{\boldmath {{$\pi$}}} satisfies the relations
	\begin{equation} \label{ep4}
	\mbox{\boldmath {{$\pi\widetilde{\mathcal{Q}}$}}}=0, \ \mbox{\boldmath {{$\pi$}}}\textbf{e}=1. 
	\end{equation}
	That is,
	at arbitrary epochs of the components of the inventory level probability distribution \mbox{\boldmath {{$\pi$}}} (with $\lambda\neq\beta$) is given by:
	
	\begin{equation}
	\pi_{i}={\left( 1-\frac{\beta}{\lambda} \right) }{\left(\frac{\beta}{\lambda} \right)^{i} }, i=0,1,\dots, \infty. 
	\label{eq:5}
	\end{equation}
	Using the components of the probability vector \mbox{\boldmath {{$\pi$}}}, we shall find the invariant measure of the CTMC $\left\{\mathcal{X}(t);t\geq 0\right\}$. For this, let \mbox{\boldmath {{$P$}}} be the invariant measure of the original system. Then the invariant probability vector must satisfy the set of equations
	\begin{equation} \label{ep5}
	\mbox{\boldmath {{$P\mathcal{Q}$}}}=0,\ \mbox{\boldmath {{$P$}}}\textbf{e}=1.
	\end{equation}
	Partition \mbox{\boldmath {{$P$}}} by levels as  
	\begin{equation} \label{ep6}
	\mbox{\boldmath {{$P=(P_0,P_1,P_2,\dots)$}}},
	\end{equation}
	where the sub vectors of \mbox{\boldmath {{$P$}}} are further partitioned as,
	$$
	\mbox{\boldmath {{$P_n$}}}=(P_{n,0}, P_{n,1}, P_{n,2}, \dots), \quad n \in \mathbb{N}_0.
	$$
	Then the above system of equations reduces to
	\begin{equation} \label{ep7}
	\mbox{\boldmath {{$P_0$}}}B+\mbox{\boldmath {{$P_1$}}}A_2=0.
	\end{equation}
	\begin{equation} \label{ep8}
	\mbox{\boldmath {{$P_n$}}}A_0+\mbox{\boldmath {{$P_{n+1}$}}}A_1+\mbox{\boldmath {{$P_{n+2}$}}}A_2=0, \quad n \in \mathbb{N}_0.
	\end{equation}
	Assume that
		\begin{equation} \label{ep9}
	{\mbox{\boldmath {{$P_0$}}}=\xi\mbox{\boldmath {{$\pi$}}}.}
	\end{equation}
	\begin{equation} \label{ep10}
	\mbox{\boldmath {{$P_n$}}}=\xi\left(
	\frac{\lambda }{\mu }
	\right)^{n}\mbox{\boldmath {{$\pi$}}}, n\geq 1,
	\end{equation}
	where $\xi$ is a constant to be determined. We verify that the equations \eqref{ep7} and \eqref{ep8} are satisfied by \eqref{ep9} and \eqref{ep10}. From \eqref{ep7}, we have
	\begin{equation} \label{ep11}
	\mbox{\boldmath {{$P_0$}}}B+\mbox{\boldmath {{$P_1$}}}A_2= \xi \mbox{\boldmath {{$\pi$}}}\left(B+\frac{\lambda }{\mu } A_2\right), 
	\end{equation}
	and from relation \eqref{ep8}, we have,
	\begin{equation} \label{ep12}
	\mbox{\boldmath {{$P_n$}}}A_0+\mbox{\boldmath {{$P_{n+1}$}}}A_1+\mbox{\boldmath {{$P_{n+2}$}}}A_2 = \xi\left( {\frac{\lambda }{\mu }} \right)^{n + 1} \mbox{\boldmath {{$\pi$}}} \left( {B + \frac{\lambda }{\mu }A_2 } \right). 
	\end{equation}
	Now from the matrices $B, A_2$ and \mbox{\boldmath {{$\widetilde{\mathcal{Q}}$}}}, it follows that
	\begin{equation} \label{ep13}
	B + \frac{\lambda }{\mu }A_2 =  \mbox{\boldmath {{$\widetilde{\mathcal{Q}}$}}}.
	\end{equation}
	Also from \eqref{ep4} we have $ \mbox{\boldmath {{$\pi\widetilde{\mathcal{Q}}$}}}=0$. Hence the right hand side of the equation \eqref{ep11} and \eqref{ep12} are zero. Hence if we take the vector \mbox{\boldmath {{$P$}}} as given by \eqref{ep6}, it follows that \eqref{ep7} and \eqref{ep8}  are satisfied. Now applying the normalizing condition \mbox{\boldmath {{$P$}}}\textbf{e}=1, we get $$
	\xi \left[ {1 + \frac{\lambda }{\mu } + \left( {\frac{\lambda }{\mu }} \right)^2  + \left( {\frac{\lambda }{\mu }} \right)^3  +  \cdots } \right] = 1.
	$$
	Hence under the condition that $\lambda<\mu$, we have 
	\begin{equation} \label{ep14}
	\xi = 1 -  {\frac{\lambda }{\mu }},	\end{equation}
	for more details, see Krishnamoorthy et al. (2015); Malini and Shajin (2020).
	\begin{thm}
		Under the necessary and sufficient condition $\lambda <\mu$ for stability, the components of the invariant measure of the process $\left\{\mathcal{X}(t);t\geq0\right\}$ with generator matrix \mbox{\boldmath {{$\mathcal{Q},$}}} is given by \eqref{ep9}, \eqref{ep10} and \eqref{ep14}. That is, $\mbox{\boldmath {{$P_0$}}}=(1-\rho)\mbox{\boldmath {{$\pi$}}}, \mbox{\boldmath {{$P_n$}}}= (1-\rho) \rho^{n} \mbox{\boldmath {{$\pi$}}},$ for $n \in \mathbb{N}$, where $\rho$ is as defined in \eqref{ep3} and  \mbox{\boldmath {{$\pi$}}} is the inventory level probability vector as given in \eqref{eq:5}.
	\end{thm}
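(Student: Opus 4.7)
The plan is to split the argument into two parts: necessity of $\lambda < \mu$ for stability, and sufficiency together with the explicit product-form identification of the invariant measure. Necessity is already available from Lemma 3.1, which applies the drift condition $\varphi A_0 \mathbf{e} < \varphi A_2 \mathbf{e}$ for a level-independent QBD (following Neuts) and simplifies to $\lambda < \mu$. So I would just invoke that lemma and turn to the constructive part.

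For sufficiency, my strategy is to propose the product-form ansatz $P_0 = \xi \pi$ and $P_n = \xi (\lambda/\mu)^n \pi$ for $n \geq 1$ with $\xi$ to be determined, and then directly verify that it satisfies the system $PQ = 0$ in its block form, namely the boundary equation $P_0 B + P_1 A_2 = 0$ and the level equations $P_n A_0 + P_{n+1} A_1 + P_{n+2} A_2 = 0$ for $n \in \mathbb{N}_0$. Substituting the ansatz into each of these, the common factor $\xi(\lambda/\mu)^{\,\cdot}\pi$ pulls out and what remains on the right is $\pi \bigl(B + (\lambda/\mu)A_2\bigr)$ in the boundary case, and $\xi(\lambda/\mu)^{n+1}\pi\bigl(B + (\lambda/\mu)A_2\bigr)$ in the level case. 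Here I would use the key algebraic identity
\begin{equation*}
B + \tfrac{\lambda}{\mu} A_2 \;=\; \widetilde{\mathcal{Q}},
\end{equation*}
obtained by direct comparison of the tridiagonal structures of $B$, $A_2$, and $\widetilde{\mathcal{Q}}$. Since $\pi$ is the stationary distribution of the auxiliary inventory-only chain, $\pi\widetilde{\mathcal{Q}}=0$, and both balance equations collapse to $0$.

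Once the balance equations are verified, only the constant $\xi$ remains. I would apply the normalization $P\mathbf{e} = 1$. Because $\pi \mathbf{e} = 1$, this reduces to $\xi \sum_{n=0}^\infty (\lambda/\mu)^n = 1$, and the geometric series converges exactly when $\lambda < \mu$, giving $\xi = 1 - \rho$ with $\rho = \lambda/\mu$. This simultaneously re-derives the sufficiency of $\lambda < \mu$ (the normalization fails otherwise) and yields the stated formulas $P_0 = (1-\rho)\pi$ and $P_n = (1-\rho)\rho^n \pi$.

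The main obstacle, and the one piece of genuine insight, is spotting the identity $B + (\lambda/\mu)A_2 = \widetilde{\mathcal{Q}}$: this is what decouples the joint queue-inventory chain into the product of a geometric law in the queue length with the inventory-only stationary law $\pi$. The remaining steps are bookkeeping once this identity and the ansatz are in hand. I would therefore present the proof in the order \textbf{(i)} invoke Lemma 3.1; \textbf{(ii)} state the ansatz; \textbf{(iii)} verify the identity and use $\pi\widetilde{\mathcal{Q}}=0$ to kill both balance equations; \textbf{(iv)} normalize via the geometric sum to fix $\xi = 1-\rho$.
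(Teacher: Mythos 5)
Your proposal follows essentially the same route as the paper: the product-form ansatz $P_0=\xi\pi$, $P_n=\xi(\lambda/\mu)^n\pi$, verification of the block balance equations via the identity $B+\tfrac{\lambda}{\mu}A_2=\widetilde{\mathcal{Q}}$ together with $\pi\widetilde{\mathcal{Q}}=0$, and normalization of the geometric series to get $\xi=1-\rho$, with stability from Lemma 3.1. This matches the paper's argument, so the proof is correct as written.
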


	It is very natural to assume that our production rate function never goes to zero because of heavy starting cost and at any time it depends on the number of inventory and the number of customer in the queue, i.e., it  is a map $${\beta}: \mathbb{N}_0\times \mathbb{N}_0 \to [\gamma,R],$$
	where $\mathbb{N}_0:=\{0,1,2,\cdots \}$, and $\gamma, R$ are some positive constant. Here in our model, state space is \mbox{\boldmath {{$\Omega$}}}$=\bigcup \limits_{n = 0}^\infty \left\{(n,i);  i \in \mathbb{N}_0\right\} $, and the action space is $A=[\gamma,R]$ also let for any state $(n,i)\in \mathbb{N}_0\times \mathbb{N}_0$, the corresponding admissible action space is $A(n,i)=[\gamma,R]$. Now, consider a Borel subset of $\mathbb{N}_0\times \mathbb{N}_0\times [\gamma,R]$ denoted by $K:=\{(n,i, \tilde{\beta})|n\in \mathbb{N}_0,i\in \mathbb{N}_0, \tilde{\beta}\in [\gamma,R]\}$. Recall (p. 6-7) corresponding to state $(n,i)$ and $\tilde{\beta}\in [\gamma,R]$, we denote the transition rates as $\pi^{\tilde{\beta}}_{(n,i)(m,j)}$.
	\begin{align}
	\left\{ \begin{array}{ll}&\pi^{\tilde{\beta}}_{(0,i)(0,j)}=(B)_{ij}\\
	&\pi^{\tilde{\beta}}_{(n,i)(n+1,j)}=(A_0)_{ij}~\text{ for any}~n\in \mathbb{N}_0\\
	&\pi^{\tilde{\beta}}_{(n,i)(n,j)}=(A_1)_{ij}~\text{ for any}~n\in \mathbb{N}\\
	&\pi^{\tilde{\beta}}_{(n,i)(n-1,j)}=(A_2)_{ij}~\text{ for any}~n\in \mathbb{N}.\label{eq 3.1}
	\end{array}\right.
	\end{align}
	All other transition rates are zero.
	Note that, 	\begin{align}\label{EQ2.1}
	&\sum_{(m,j)\in \mathbb{N}_0\times\mathbb{N}_0}\pi^{\tilde{\beta}}_{(n,i)(m,j)}\equiv 0,~\forall (n,i, \tilde{\beta})\in K
	\end{align} 
	and
	\begin{align}\label{E5.16}
	\displaystyle{\sup_{(n,i) \in \mathbb{N}_0 \times \mathbb{N}_0 }}\pi^*_{(n,i)}={\sup_{(n,i) \in \mathbb{N}_0 \times \mathbb{N}_0 }}\sup_{\tilde{\beta}\in [\gamma,R]} \pi^{\tilde{\beta}}_{(n,i)}={\sup_{(n,i) \in \mathbb{N}_0 \times \mathbb{N}_0 }} \sup_{\tilde{\beta}\in [\gamma,R]} \Big [ -\pi_{(n,i)(n,i)}^{\tilde{\beta}} \Big ]=R+\mu+ \lambda<\infty .
	\end{align} 
	Define $r(n,i,\beta)$ is the cost function in the long run corresponding to production rate function $\beta$. Then the cost function is of the form:
	\begin{equation}\label{cost}
	r(n,i,\beta)=h\cdot i+c_1\cdot n+\beta\cdot c_2\cdot I_{\{i>S\}}+c_3\cdot n\cdot I_{\{i=0\}},
	\end{equation}
	where $h$ is the holding cost per item per unit time in the ware house, $c_1$ is the service cost per customer, $c_2$ is the storage/penalty cost per item per production when the inventory level is beyond $S$ and $c_3$ is the cost incurred due to loss per customer when the item of the inventory is out of stock. Note that our cost function is continuous in the third argument for each fixed first $(n,i) \in \mathbb{N}_0\times \mathbb{N}_0 $. Here our aim is to minimize our accumulated cost over all production rate functions, i.e.,   
	$$\mathscr{U}_{SM}:=\{{\beta} \; | \; {\beta}: \mathbb{N}_0\times \mathbb{N}_0 \to [\gamma,R]\}.$$
	This is the collection of all deterministic stationary strategies/policies. Note that we can write $\mathscr{U}_{SM}$ as the countable product space $[\gamma,R]$. So, Tychonoff's theorem (see [Guo and Hern$\acute{\rm a}$ndez-Lerma (2009), Proposition A. 6]) yields that $\mathscr{U}_{SM}$ is compact.\\
	{\bf Evolution of the Control System:} 	Next, we give an informal description of the evolution of the CTCMCs as follows. The controller observes continuously the current state of the system. When the system is in state $(n,i)\in $\mbox{\boldmath {{$\Omega$}}} at time $t\geq0$, he/she chooses action $\tilde{\beta}\in [\gamma,R]$ according to some control. As a consequence of this, the following happens: 
	\begin{itemize}
		\item the controller incurs an immediate  cost at rate $r(n,i, \tilde{\beta})$; and\\
		\item the system stays in state $(n,i)$ for a random time, with rate of leaving $(n,i)$ given by $\pi_{(n,i)}^{\tilde{\beta}}$, and then jumps to a new state $(m,j)\neq (n,i)$ with the probability determined by $\dfrac{\pi_{(n,i)(n,i)}^{\tilde{\beta}}}{\pi_{(n,i)}^{\tilde{\beta}}}$ (see [Guo and Hern$\acute{\rm a}$ndez-Lerma (2009), Proposition B.8] for details).
	\end{itemize} 
	When the state of the system transits to the new state $(m,j)$, the above procedure is repeated.
	The controller tries to minimize his/her costs with respect to some performance criterion defined by (\ref{main}), (\ref{ergodicriskcost}) and (\ref{EQ9.1}) below.\\
	For each ${\beta}\in \mathscr{U}_{SM}$, the associated rates are defined as
	\begin{align}\label{E2.2}
	\pi_{(n,i)(m,j)}^{\beta}:=\pi_{(n,i)(m,j)}^{{\beta}(n,i)}~\text{ for }~(n,i),(m,j)\in \mathbb{N}_0\times\mathbb{N}_0~\text{for}~t\geq 0.
	\end{align} 
	Let $Q({\beta}):=\left[\pi_{(n,i)(m,j)}^{{\beta}}\right]$ be the associated matrix of transition rates with the $((n,i),(m,j))^\text{th}$ element $\pi_{(n,i)(m,j)}^{{\beta}}$. Any (possible substochastic and homogeneous) transition function\\ $\tilde{p}(s,(n,i),t,(m,j),{\beta})$ such that
	$$\lim_{\gamma\rightarrow 0^{+}}\frac{\tilde{p}(t,(n,i),t+\gamma,(m,j),{\beta})-\delta_{(n,i)(m,j)}}{\gamma}=\pi_{(n,i)(m,j)}^{{\beta}}$$ is called a $Q$-processes with the transition rate matrices $Q({\beta})$, where $\delta_{(n,i)(m,j)}$ is the Kronecker delta. Under Assumption \ref{A5} (a) (below on p. 13), we will denote by $\{Y(t,\tilde{\beta})\}$ the associated right-continuous Markov chain with values in $\mathbb{N}_0\times\mathbb{N}_0$ and  for each ${\beta}\in \mathscr{U}_{SM}$, the regular $Q$ process simply denoted as $p(s,(n,i),t,(m,j),{\beta})$, see [Guo and Hern$\acute{\rm a}$ndez-Lerma (2009), p. 12].\\
	Also, for each initial state $(n,i)\in \mathbb{N}_0\times\mathbb{N}_0$ at time $s=0$, we denote our probability space as $(\Omega,\mathscr{F}, P^{{\beta}}_{(n,i)})$, where $\mathscr{F}$ is Borel $\sigma$-algebra over $\Omega$ and $P^{{\beta}}_{(n,i)}$ denotes the probability measure determined by $p(s,(n,i),t,(m,j),{\beta})$. Denote $E^{{\beta}}_{(n,i)}$ as the corresponding expectation operator.
	For any real-valued measurable function $u$ on $K$ and ${\beta}\in \mathcal{U}_{SM}$, let
	\begin{equation}\label{E3.6}
	u(n,i,{\beta}):=u(n,i,{\beta}(n,i))~\forall~(n,i)\in \mathbb{N}_0\times\mathbb{N}_0~\text{and}~t\geq 0,
	\end{equation}
	whenever the integral is well defined. For any measurable function $V\geq 1$ on $\mathbb{N}_0\times\mathbb{N}_0$, we define the $V$-weighted supremum norm $\|\cdot\|$ of a real-valued measurable function $u$ on $\mathbb{N}_0\times\mathbb{N}_0$ by $$\|u\|_V:=\sup_{(n,i) \in  \mathbb{N}_0\times  S_n}\biggl\{\frac{|u(n,i)|}{V(n,i)}\biggr\},$$ and the Banach space $B_V(\mathbb{N}_0\times\mathbb{N}_0):=\{u:\|u\|_V<\infty\}$.

	Now we briefly describe the problems we consider in this paper.
	\subsection{Discounted Cost Problem}
	For ${\beta} \in \mathscr{U}_{SM}$, define $\alpha$-discounted cost criterion by 
	\begin{equation}\label{main}
	I_{\alpha}^{{\beta}} (n,i)\ = \    E_{(n,i)}^{{\beta}} \left[  
	\int_{0}^{\infty}e^{-\alpha t} r(Y(t),{\beta}(Y(t-))) dt   \right]
	\end{equation} 
	where $\alpha>0$ is the discount factor, $Y(\cdot)$ is the Markov chain corresponding to ${\beta} \in  \mathscr{U}_{SM}$ with $Y(0)=(n,i)$, $E_{(n,i)}^{ {{\beta}}}$ denote the corresponding expectation and $r $ is defined as in (\ref{cost}). 
	Here the controller wants to minimize his cost over $\mathscr{U}_{SM}$.\\
	{\bf Definition:} A control ${\beta}^*\in \mathscr{U}_{SM}$ is said to be optimal if 
	$$I_\alpha^{*}(n,i):=I_{\alpha}^{{\beta}^*} ( n,i)=\inf_{{{\beta}} \in \mathscr{U}_{SM}}I_{\alpha}^{{{\beta}}} (n,i). $$
	\subsection{Ergodic Cost Criterion}
	For ${\beta} \in \mathscr{U}_{SM}$,	the ergodic cost criterion is defined by
	\begin{equation}\label{ergodicriskcost}
	J(n,i, {\beta}) \ = \ \limsup_{T \to \infty} \frac{1}{T}  E_{(n,i)}^{{\beta}} \Big[ \int^T_0 r(Y(t), {\beta}(Y(t-))) dt 
	\Big] \, ,
	\end{equation}
	where $r $ is defined as in (\ref{cost}) and   
	$Y(\cdot)$ is the process  corresponding to the control 
	${{\beta}} \in {\mathscr{U}_{SM}}$ and $E_{(n,i)}^{ {{\beta}}}$ denote the expectation where control ${\beta}$ used with $Y(0)=(n,i)$. Here the controller wants to minimize his cost over $\mathscr{U}_{SM}$.\\
	{\bf Definition:} A control ${\beta}^*\in \mathscr{U}_{SM}$ is said to be optimal if 
	$$J^*(n,i):=J(n,i, {\beta}^{*})=\inf_{{\beta} \in \mathscr{U}_{SM}}	J(n,i, {\beta}).$$
	\subsection{Pathwise average cost criterion}
	Pathwise average cost (PAC) criterion $J_c(\cdot,\cdot,\cdot)$ is defined as follows: for all ${\beta}\in \mathscr{U}_{SM}$ and $(n,i)\in \mathbb{N}_0\times \mathbb{N}_0$,
	\begin{align}\label{EQ9.1}
	J_c(n,i,{\beta}):=\limsup_{T\rightarrow\infty}\frac{1}{T}\int_{0}^{T}r(Y(t),{\beta}(Y(t-)))dt.
	\end{align}
	{\bf Definition:} For a given $\varepsilon\geq 0$, a policy ${\beta}^{*}\in  \mathscr{U}_{SM}$ is said to $\varepsilon$-PAC-optimal if there exists a constant $g^{*}$ such that
	$$P^{{\beta}^{*}}_{(n,i)}(J_c(n,i,{\beta}^{*})\leq g^{*}+\varepsilon)=1~\text{and}~P^{{\beta}}_{(n,i)}(J_c(n,i,{\beta})\geq g^{*})=1,$$
	for all $(n,i)\in \mathbb{N}_0\times \mathbb{N}_0$ and ${\beta}\in \mathscr{U}_{SM}$. For $\varepsilon=0$, a $0$-PAC-optimal policy simply called a PAC-optimal policy.\\
	
	To ensure the regularity of a $Q$-process and finiteness of the cost criterions (\ref{main}), (\ref{ergodicriskcost}) and (\ref{EQ9.1}), we take the following assumption.
	\begin{assumption}\label{A5}
		\begin{itemize}
			\item [(a)] 	There exist a nondecreasing function $W\geq 1$ on $\mathbb{N}_0 \times \mathbb{N}_0$ and constants $c_1>0$ and $b_1\geq 0$ such that for any $ \ (n,i) \in \mathbb{N}_0 \times \mathbb{N}_0, ~\text{and}~\ \tilde{\beta}\in [\gamma,R]$, the following holds: $$\; \;\Pi_{(n,i)}^{\tilde{\beta}} W(n,i) =\sum_{(m,j)\in\mathbb{N}_0 \times \mathbb{N}_0}\pi^{\tilde{\beta}}_{(n,i)(m,j)}W(m,j)\leq -c_1 W(n,i) + b_1\delta_{(n,i)(0,0)} , \,  $$ where $\delta_{(n,i)(m,j)}$ is the Dirac delta measure.\\
			\item [(b)] For every $(n,i)\in \mathbb{N}_0 \times \mathbb{N}_0 $ and some constant $M >0$, $r(n,i,\tilde{\beta}) \leq  M W(n,i)$.

		\end{itemize}
	\end{assumption}
\begin{remark}
	\begin{itemize}
		\item [(1)]  Assumption \ref{A5} (a) and its variants are used to study ergodic control problem, see, Guo and Hern$\acute{\rm a}$ndez-Lerma (2009); Meyn and Tweedie (1993); Pal and Pradhan (2019). 
		\item [(2)]
		Assumption \ref{A5} (b) and its variants are very useful Assumption for unbounded costs in control theory, see Golui and Pal (2022); Guo and Hern$\acute{\rm a}$ndez-Lerma (2009).	For bounded cost as in [Ghosh and Saha (2014); Kumar and Pal (2015)], Assumption \ref{A5} (b) is not required. By (\ref{eq 3.1}) and (\ref{cost}), we have that the functions, $r(n,i,{\tilde{\beta}}),\; \pi_{(n,i)(m,j)}^{{\tilde{\beta}}}$, and 
		$\sum_{(m,j)\in\mathbb{N}_0 \times \mathbb{N}_0}\pi^{\tilde{\beta}}_{(n,i)(m,j)}W(m,j)$ are all continuous in $\tilde{\beta}$ for each fixed $(n,i)\in \mathbb{N}_0 \times \mathbb{N}_0,$ with $W$ as in Assumption \ref{A5}. To ensure the existence of optimal stationary strategies, we need this continuity (see, for instance, [Ghosh and Saha (2014); Kumar and Pal (2013), (2015)] and their references).
	\end{itemize}
	
\end{remark}
Now to prove the existence of an optimal stationary policy for discounted cost criterion, we need the following Assumption, see [Guo and Hern$\acute{\rm a}$ndez-Lerma (2009), chapter 6].
	\begin{assumption}\label{A2}
		There exists a nonnegative function $W'$ on $ \mathbb{N}_0 \times \mathbb{N}_0$ and constants $c'>0, \; b'\geq 0$, and $ M' > 0$ such that
		\begin{align*}
		&	\pi_{(n,i)}^* W(n,i) \leq M' W'(n,i) , \ (n,i) \in \mathbb{N}_0 \times \mathbb{N}_0, \\
		&\sum_{(m,j)\in\mathbb{N}_0 \times \mathbb{N}_0}\pi^{\tilde{\beta}}_{(n,i)(m,j)}W'(m,j) \leq  c' W'(n,i) + b' , \ (n,i) \in \mathbb{N}_0 \times \mathbb{N}_0, \ \tilde{\beta} \in [\gamma,R]. \, 
		\end{align*}
	\end{assumption}
	We now state an important condition that is satisfied by our transition rates given by \eqref{eq 3.1}.\\
	\textbf{Condition A:}
	For each $\beta\in \mathscr{U}_{SM}$, the corresponding Markov process $\{Y(t)\}$ with transition function $p((n,i),t,(m,j),\beta)$ is irreducible, which means that, for any two states $(n,i)\neq (m,j)$, there exists a set of distinct states $(n,i)=(m_1,i_1),\cdots,(m_k,i_k)$ such that $$\pi_{(m_1,i_1)(m_2,i_2)}^{\beta(m_1,i_1)}\cdots\pi_{(m_k,i_k)(m,j)}^{\beta(m_k,i_k)}>0.$$

	\begin{remark}
		\begin{itemize}
			\item [(1)] Condition A is satisfied by our transition rates given by \eqref{eq 3.1}.
			\item [(2)]Under Assumptions \ref{A5} and Condition A, for each $\beta\in \mathscr{U}_{SM}$, by [Guo and Hern$\acute{\rm a}$ndez-Lerma (2009), Propositions C.11 and C.12], we say that the MC $\{Y(t)\}$ has a unique invariant probability measure, $\vartheta_\beta$ which satisfies $$\vartheta_\beta(m,j)=\lim_{t\rightarrow\infty}p((n,i),t,(m,j),\beta)~(\text{ independent of}~(n,i)\in \mathbb{N}_0\times\mathbb{N}_0) ~\text{for all}~ (m,j)\in \mathbb{N}_0\times\mathbb{N}_0.$$ Thus by Assumption \ref{A5} (a) and [Guo and Hern$\acute{\rm a}$ndez-Lerma (2009), Lemma 6.3 (i)], we have $$\vartheta_\beta(W):=\sum_{(m,j)}W(m,j)\vartheta_\beta(m,j)\leq \frac{b_1}{c_1}<\infty,$$ and so, 
			\begin{equation}\label{EQN8.1}
			\vartheta_\beta(u):=\sum_{(m,j)}u(m,j)\vartheta_\beta(m,j)<\infty,~\forall \beta\in \mathscr{U}_{SM}~\text{for any}~ u\in B_W(\mathbb{N}_0 \times \mathbb{N}_0).
			\end{equation}
		
		\end{itemize}
		
	\end{remark}
	To get the existence of average cost optimal (ACO) stationary strategy, in addition to Assumptions \ref{A5} and \ref{A2}, we impose the following condition. This assumption is very important to study a ergodic control problem, see [Guo and Hern$\acute{\rm a}$ndez-Lerma (2009), chpter 7]. Under this assumption, the Markov chain is uniformly ergodic.
\begin{assumption}\label{A3}
The control model is uniformly ergodic, which means the following: there exist constants $\delta>0$ and $L_2>0$ such that (using the notation in (\ref{EQN8.1})) $$\sup_{\beta\in \mathscr{U}_{SM}}|E^{\beta}_{(n,i)}u(Y(t))-\vartheta_\beta(u)|\leq L_2 e^{-\delta t}\|u\|_W W(n,i)$$ for all $(n,i)\in \mathbb{N}_0\times\mathbb{N}_0$, $u\in B_W(\mathbb{N}_0 \times \mathbb{N}_0)$, and $t\geq 0$.
\end{assumption}
	To get the existence of pathwise average cost optimal (PACO) stationary strategy, in addition to Assumptions \ref{A5}, \ref{A2} and \ref{A3}, we impose the following conditions. 
	\begin{assumption}\label{A4}
		Let $W\geq 1$ be as in Assumption \ref{A5}. For $k=1,2$, there exist nonnegative functions $W^{*}_k\geq 1$ on $\mathbb{N}_0$ and constants $c^{*}_k>0$, $b^{*}_k\geq 0$, and $M^*_k>0$ such that for all $(n,i)\in \mathbb{N}_0\times\mathbb{N}_0$ and $\tilde{\beta}\in [\gamma, R]$,
		\begin{itemize}
			\item [(a)] $W^2(n,i)\leq M^*_1W_1^*(n,i)$ and $\displaystyle\sum_{(m,j)}\pi^{\tilde{\beta}}_{(n,i)(m,j)}W^*_1(n,j)\leq -c_1^*W^*_1(n,i)+b_1^{*}$.\\
			\item [(b)] $[\pi_{(n,i)}^* W(n,i)]^2\leq M^*_2W_2^*(n,i)$ and $\sum_{(m,j)}\pi^{\tilde{\beta}}_{(n,i)(m,j)}W^*_2(m,j)\leq -c_2^*W^*_2(n,i)+b^*_2.$
		\end{itemize}
	\end{assumption}
	\section{Analysis of Discounted Cost Problem}
	In this section we study the infinite horizon discounted cost problem given by the criterion (\ref{main}) and prove the existence of optimal policy. Corresponding to the cost criterion (\ref{main}), we recall the following function 
	\begin{equation*}
	I^*_{\alpha}(n,i)= \inf_{{\beta} \in \mathscr{U}_{SM}}I_{\alpha}^{{\beta}} (n,i).
	\end{equation*}
	Using the dynamic programming heuristics, the Hamilton-Jacobi-Bellman (HJB) equations for discounted cost criterion are given by
	\begin{eqnarray}\label{discount_hjb}
	\alpha I^*_{\alpha}(n,i)&=& \inf_{\tilde{\beta}\in [\gamma,R]} \Big [ \Pi_{(n,i)}^{\tilde{\beta}} 
	I^*_{\alpha}(n,i) + r(n,i,\tilde{\beta}) \Big ]\ , 
	\end{eqnarray}
	where $\Pi_{(n,i)}^{\tilde{\beta}} f(n,i):= \displaystyle{\sum_{(m,j)\in \mathbb{N}_0 \times \mathbb{N}_0}} \pi_{(n,i)(m,j)}^{\tilde{\beta}}f(m,j)$, for any function $f(n,i)$.\\
	Define an operator $T:B_W(\mathbb{N}_0 \times \mathbb{N}_0)\rightarrow B_W(\mathbb{N}_0 \times \mathbb{N}_0)$ as
	\begin{equation} \label{E5.7}
	Tu(n,i):= \mathop{\inf}\limits_{\tilde{\beta}\in [\gamma,R]} \biggl [\dfrac{r(n,i,\tilde{\beta})}{R+\alpha+\lambda+\mu}+\dfrac{R+\lambda+\mu}{R+\alpha+\lambda+\mu} \mathop{\sum}\limits_{(m,j)\in \mathbb{N}_0 \times \mathbb{N}_0}p^{\tilde{\beta}}_{(n,i)(m,j)}u(m,j) \biggr ],
	\end{equation} 	
	for $u\in B_W(\mathbb{N}_0 \times \mathbb{N}_0)$ and $(n,i)\in \mathbb{N}_0\times\mathbb{N}_0$, where $$p^{\tilde{\beta}}_{(n,i)(m,j)}:=\frac{\pi^{\tilde{\beta}}_{(n,i)(m,j)}}{R+\lambda+\mu}+\delta_{(n,i)(m,j)}$$ is a probability measure on $\mathbb{N}_0\times\mathbb{N}_0$ for each $(n,i,\tilde{\beta})\in\mathbb{N}_0\times\mathbb{N}_0\times [\gamma, R]$ and $\delta_{(n,i)(m,j)}$ is the Dirac-delta function.\\
	Next we prove the optimality theorem for the discounted cost criterion. In this theorem, we find the existence of solution of discounted-cost optimality equation (DCOE) and optimal stationary policy.
	\begin{thm}\label{T6}
		Suppose that Assumptions \ref{A5} and \ref{A2} hold.
		Define $u_0:=0$, $u_{k+1}:=Tu_k$.
		Then the following hold.
		\begin{itemize}
			\item [(a)] The sequence $\{u_k\}_{k\geq 0}$ is monotone nondecreasing, and the limit $u^*:=\lim_{k\rightarrow\infty}u_k$ is in $B_W(\mathbb{N}_0 \times \mathbb{N}_0)$.\\
			\item [(b)] The function $u^*$ in (a) satisfies the fixed-point equation $u^*=Tu^*$, or, equivalently, $u^*$ verifies the DCOE, that is
			\begin{equation}\label{EQ5.7}
			\alpha u^*(n,i)=\inf_{\tilde{\beta} \in [\gamma,R]}\biggl\{r(i,n,\tilde{\beta})+\mathop{\sum}\limits_{(m,j)\in \mathbb{N}_0 \times \mathbb{N}_0}\pi^{\tilde{\beta}}_{(n,i)(m,j)}u^*(m,j)\biggr\}~\forall (n,i)\in \mathbb{N}_0\times\mathbb{N}_0.
			\end{equation}
			
			\item [(c)] There exist stationary policies $\beta_k$ (for each $k\geq 0$) and $\beta^*_{\alpha}$ attaining the minimum in the equations $u_{k+1}=Tu_k$ and the DCOE (\ref{EQ5.7}), respectively.  Moreover, $u^*=I^*_{\alpha}$. and the policy $\beta^*_\alpha$ is discounted-cost optimal.\\
			\item [(d)] Every limit point in $\mathscr{U}_{SM}$ of the sequence $\{\beta_k\}$ in (c) is a discounted-cost optimal stationary policy.
		\end{itemize}
	\end{thm}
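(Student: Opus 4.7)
I will implement a value-iteration scheme and then run a verification argument, treating parts (a), (b), (c), (d) in order.

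For (a) and (b), since the cost $r$ in (\ref{cost}) is nonnegative and $u_0\equiv 0$, we have $u_1=Tu_0\geq 0=u_0$, and the operator $T$ is monotone because each $p^{\tilde{\beta}}_{(n,i)(\cdot)}$ is a probability kernel and the infimum preserves order; induction gives $u_{k+1}\geq u_k$. For the $B_W$-bound I would rewrite Assumption \ref{A5}(a) in uniformized form:
\begin{equation*}
\sum_{(m,j)} p^{\tilde{\beta}}_{(n,i)(m,j)} W(m,j)\leq \Bigl(1-\frac{c_1}{R+\lambda+\mu}\Bigr) W(n,i)+\frac{b_1}{R+\lambda+\mu}\,\delta_{(n,i)(0,0)},
\end{equation*}
and combine this with $r(n,i,\tilde{\beta})\leq MW(n,i)$ from Assumption \ref{A5}(b). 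A direct induction then shows $u_k\leq CW$ for some constant $C$ independent of $k$, so monotone convergence gives $u^*:=\lim_k u_k\in B_W$. Passing to the limit in $u_{k+1}=Tu_k$ by dominated convergence (with dominating function $CW$) together with the continuity of $r(n,i,\cdot)$ and $\pi^{\cdot}_{(n,i)(m,j)}$ yields $u^*=Tu^*$; multiplying through by $R+\alpha+\lambda+\mu$ and using the definition of $p^{\tilde{\beta}}$ recasts this as the DCOE (\ref{EQ5.7}).

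For (c), the map $\tilde{\beta}\mapsto r(n,i,\tilde{\beta})+\sum_{(m,j)}\pi^{\tilde{\beta}}_{(n,i)(m,j)}u^*(m,j)$ is continuous on the compact set $[\gamma,R]$ since $u^*\in B_W$, so a measurable-selection theorem from Guo and Hern\'andez-Lerma (2009) produces selectors $\beta_k$ and $\beta^*_\alpha$ attaining the infima in $u_{k+1}=Tu_k$ and in the DCOE, respectively. To identify $u^*$ with $I^*_\alpha$ and verify optimality of $\beta^*_\alpha$, I would apply Dynkin's formula to $t\mapsto e^{-\alpha t}u^*(Y(t,\beta))$ for an arbitrary $\beta\in\mathscr{U}_{SM}$: iterating the drift inequality gives the $W$-moment bound $E^{\beta}_{(n,i)}W(Y(t))\leq e^{-c_1 t}W(n,i)+b_1/c_1$, which drives $e^{-\alpha t}E^{\beta}_{(n,i)}u^*(Y(t))\to 0$. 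Integrating the DCOE inequality along the sample path and letting $t\to\infty$ then gives $u^*(n,i)\leq I^{\beta}_\alpha(n,i)$ for every $\beta$, with equality at $\beta^*_\alpha$, so $u^*=I^*_\alpha$ and $\beta^*_\alpha$ is discounted-cost optimal.

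For (d), by Tychonoff's theorem $\mathscr{U}_{SM}=[\gamma,R]^{\mathbb{N}_0\times\mathbb{N}_0}$ is compact in the product topology, and for each fixed $(n,i)$ the map $\beta\mapsto r(n,i,\beta(n,i))+\sum_{(m,j)}\pi^{\beta(n,i)}_{(n,i)(m,j)}u^*(m,j)$ is continuous. Hence every limit point $\bar{\beta}$ of the sequence $\{\beta_k\}$ attains the pointwise infimum in the DCOE and is therefore discounted-cost optimal by (c). The main obstacle I expect is the Dynkin step in (c): because $u^*$ is unbounded we must simultaneously control its growth against $W$, verify the integrability required to apply Dynkin's formula on the countable state space, and prove the vanishing of the boundary term as $t\to\infty$. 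Assumption \ref{A5}(a) is essential for all three, while Assumption \ref{A2} furnishes the further regularity (finiteness of $\pi^* W$ moments) needed for non-explosion of the $Q$-process and a usable form of Dynkin's formula.
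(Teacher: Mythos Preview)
Your proposal is correct and follows essentially the same route as the paper, with one presentational difference worth noting. For part (b) you write that ``passing to the limit by dominated convergence together with continuity'' gives $u^*=Tu^*$; the paper is more explicit here and splits this into the two inequalities $Tu^*\geq u^*$ (immediate from monotonicity of $T$ and $u_k\uparrow u^*$) and $u^*\geq Tu^*$ (obtained by taking selectors $\beta_k$ for $Tu_k$, extracting a convergent subsequence in the compact space $\mathscr{U}_{SM}$, and applying the generalized Fatou lemma). Your ingredients suffice, but the interchange of $\lim_k$ and $\inf_{\tilde\beta}$ does require this compactness-plus-selector step rather than dominated convergence alone, so make that explicit.

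For part (c) you actually go further than the paper: where the paper simply invokes \cite{GuoLermabook}, Theorem~6.9(b), as a black box for the verification $u^*=I^*_\alpha$, you propose to carry out the Dynkin-formula argument directly, using the $W$-moment bound $E^{\beta}_{(n,i)}W(Y(t))\leq e^{-c_1 t}W(n,i)+b_1/c_1$ to kill the boundary term $e^{-\alpha t}E^\beta_{(n,i)}u^*(Y(t))$. This is precisely what that cited theorem does internally, so your version is more self-contained at the cost of reproving a standard result; your identification of Assumption~\ref{A2} as supplying the regularity needed for Dynkin's formula on the unbounded $u^*\in B_W$ is exactly right. Part (d) matches the paper's argument.
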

	\begin{proof}
		\begin{itemize}
			\item [(a)] We first prove the monotonicity of $\{u_k\}_{k\geq 0}$. Let $u_0=0$. Since $r\geq 0$, $u_1(n,i)\geq u_0(n,i)$ for all $(n,i)\in \mathbb{N}_0 \times \mathbb{N}_0$. Consequently, the monotonicity of $T$ gives
			$$u_k=T^ku_0\leq T^ku_1=u_{k+1}~\text{ for every}~k\geq 1.$$ So, the sequence $\{u_k\}_{k\geq 0}$ is a monotone increasing sequence. So, the limit $u^*$ exists. Also, by direct calculations we get
			$$|u_k(n,i)|\leq \frac{b_1M}{\alpha(\alpha+c_1)}+\frac{MW(n,i)}{\alpha+c_1}\leq \frac{(\alpha+b_1)M}{\alpha(\alpha+c_1)}W(n,i)~\forall k\geq 0~\text{and}~(n,i)\in \mathbb{N}_0 \times \mathbb{N}_0,$$ which implies that $\sup_{k\geq 0}\|u_k\|_W$ is finite. Hence $u^*\in B_W(\mathbb{N}_0 \times \mathbb{N}_0)$.
			\item [(b)] By the monotonicity of $T$, $Tu^*\geq Tu_k=u_{k+1}$ for all $k\geq 0$, and thus 
			\begin{align}\label{EQ7.19}
			Tu^*\geq u^*.
			\end{align}
			Now, there exists $\beta_k\in \mathscr{U}_{SM}$ such that
			\begin{equation*} 
			u_{k+1}(n,i)= \biggl [\dfrac{r(n,i,\beta_k)}{R+\alpha+\lambda+\mu}+\dfrac{R+\lambda+\mu}{R+\alpha+\lambda+\mu} \mathop{\sum}\limits_{(m,j)}p^{\beta_k}_{(n,i)(m,j)}u_k(m,j)  \biggr ],
			\end{equation*} 
			for all $k\geq 0$. Since $\mathscr{U}_{SM}$ is compact, there exist a policy $\beta^*\in \mathscr{U}_{SM}$ and a subsequence of $k$ for which $\lim_{k\rightarrow\infty}\beta_k=\beta^*$.
			So, by the generalized Fatou's lemma by taking $k\rightarrow\infty$, we get
			\begin{equation*} 
			u^*(n,i)\geq \biggl [\dfrac{r(n,i,\beta^*)}{R+\alpha+\lambda+\mu}+\dfrac{R+\lambda+\mu}{R+\alpha+\lambda+\mu} \mathop{\sum}\limits_{(m,j)}p^{\beta^*}_{(n,i)(m,j)}u^*(m,j)  \biggr ]~\forall (n,i)\in \mathbb{N}_0 \times \mathbb{N}_0,
			\end{equation*}
			which gives $u^*\geq Tu^*$. So, $u^*= Tu^*$, and so we get DCOE (\ref{EQ5.7}). 
			\item [(c)] Since we have that $u_k$ and $u^*$ are in $B_W(\mathbb{N}_0 \times \mathbb{N}_0)$, from [Guo and Hern$\acute{\rm a}$ndez-Lerma (2009), Proposition A.4], we see that the functions in (\ref{E5.7}) and (\ref{EQ5.7}) are continuous in $\tilde{\beta}\in [\gamma,R]$. Hence the first claim of part (c) holds.
			Moreover, for all $\forall (n,i)\in \mathbb{N}_0 \times \mathbb{N}_0$ and $\beta\in \mathscr{U}_{SM}$, it follows from (\ref{EQ5.7}) that
			\begin{equation}\label{EQ7.20}
			\alpha u^*(n,i)\leq \biggl\{r(n,i,\beta)+\Pi^{\beta}_{(n,i)}u^*(n,i)\biggr\}~\text{ for all}~ (n,i)\in \mathbb{N}_0 \times \mathbb{N}_0
			\end{equation}
			with equality if $\beta=\beta^*_\alpha$. Hence, (\ref{EQ7.20}), together with [Guo and Hern$\acute{\rm a}$ndez-Lerma (2009), Theorem 6.9 (b)], yeilds that $$I_\alpha^{\beta^*_\alpha}(n,i)=u^*(i)\leq I_\alpha^{\beta}(n,i)~\text{ for all}~ (n,i)\in \mathbb{N}_0 \times \mathbb{N}_0~\text{and}~t\geq 0.$$ Hence, we prove part (c).
			\item [(d)] By part (a) and the generalized dominated convergence theorem in [Guo and Hern$\acute{\rm a}$ndez-Lerma (2009), Proposition A.4], every limit point $\beta\in \mathscr{U}_{SM}$ of $\{\beta_k\}$ satisfies
			\begin{equation*} 
			u^*(n,i) =  \biggl [\dfrac{r(n,i,\beta)}{R+\alpha+\lambda+\mu}+\dfrac{R+\lambda+\mu}{R+\alpha+\lambda+\mu} \mathop{\sum}\limits_{(m,j)}p^\beta_{(n,i)(m,j)}u^*{(m,j)}  \biggr ],
			\end{equation*}
			which is equivalent to \begin{equation*}
			\alpha u^*(n,i)= \biggl\{r(n,i,\beta)+\Pi^{\beta}_{(n,i)}u^*(n,i)\biggr\}~\forall  (n,i)\in \mathbb{N}_0 \times \mathbb{N}_0.
			\end{equation*}
			Thus by (b) and [Guo and Hern$\acute{\rm a}$ndez-Lerma (2009), Theorem 6.9 (c)], $I^\beta_\alpha(n,i)=u^*(n,i)=I^*_\alpha(n,i)$ for every $ (n,i)\in \mathbb{N}_0 \times \mathbb{N}_0$.
		\end{itemize}
	\end{proof}
	\subsection{\textbf{The Discounted-Cost Value Iteration Algorithm:}}
	Now using value iteration algorithm, we find an optimal production rate $   \beta^{*}_\alpha$ for discounted-cost criterion.
	Since this optimal production rate $\beta^{*}_\alpha$ cannot be computed explicitly, we explore the possibility of algorithmic computation. Thus, in the presence of Theorem \ref{T6}, one can use the following value iteration algorithm for computing $\beta^{*}_\alpha$.\\
	\textbf{\textit{A Value Iteration Algorithm 4.1:}}\label{AL3} By the value iteration algorithm, we will find an optimal production rate $\beta^{*}_\alpha$, described briefly as follows:
	~\\{\bf Step 0:}
	Let $v_0 {(n,i)}= {0}$, \ for all \ $(n,i) \in \mathbb{N}_0\times \mathbb{N}_0.$
	~\\ 
	{\bf Step 1:} For $k\ge 1$, define
	\begin{equation} \label{value}
	v_k{(n,i)} = \mathop{\inf}\limits_{\tilde{\beta} \in [\gamma,R]} \biggl [\dfrac{r(n,i,\tilde{\beta})}{R+\alpha+\lambda+\mu}+\dfrac{R+\lambda+\mu}{R+\alpha+\lambda+\mu} \mathop{\sum}\limits_{(m,j)}p^{\tilde{\beta}}_{(n,i)(m,j)}v_{k-1}{(m,j)}\biggr ],
	\end{equation}  where $(n,i),(m,j)\in \mathbb{N}_0\times \mathbb{N}_0$, $p^{\tilde{\beta}}_{(n,i)(m,j)}:=\frac{\pi^{\tilde{\beta}}_{(n,i)(m,j)}}{R+\lambda+\mu}+\delta_{(n,i)(m,j)}$.
	~\\
	{\bf Step 2:} Choose $\beta_k\in \mathscr{U}_{SM}$ attaining the minimum in the right-hand side of (\ref{value}). 
	~\\
	{\bf Step 3:} $v_{*}{(n,i)}=\mathop{\lim}\limits_{k\rightarrow \infty}v_k{(n,i)},$ \ for all \ $(n,i) \in \mathbb{N}_0\times \mathbb{N}_0.$
	~\\
	{\bf Step 4:} Every limit point in $\mathscr{U}_{SM}$ of the sequence $\lbrace \beta_k \rbrace$ is a discounted-cost optimal stationary policy.
	
	\subsection*{Numerical Example:} Now, we discuss the results obtained from the implementation of discounted-cost value iteration algorithm. Unless stated otherwise, the parameters are considered as $\lambda=3, \mu=5, \alpha=0.7, h=\$100, c_1=\$20, c_2=\$30, c_3=\$40, S=2$ and $[\gamma,R] = [0.001, 2]$ discretized as $\{0.001, 0.002, 0.003, \dots, 1.999, 2\}$ for computational purposes. Note that $n$ and $i$ are assumed to range from $0$ to $4$. Figure \ref{figure1} shows the speed of convergence of the value function for selected states and different $\alpha$ values. 
	
	\begin{figure}%
		\centering
		\subfloat [\centering Convergence of value iteration in a finite number of steps.]
		{
			{\includegraphics[width=7.5cm]{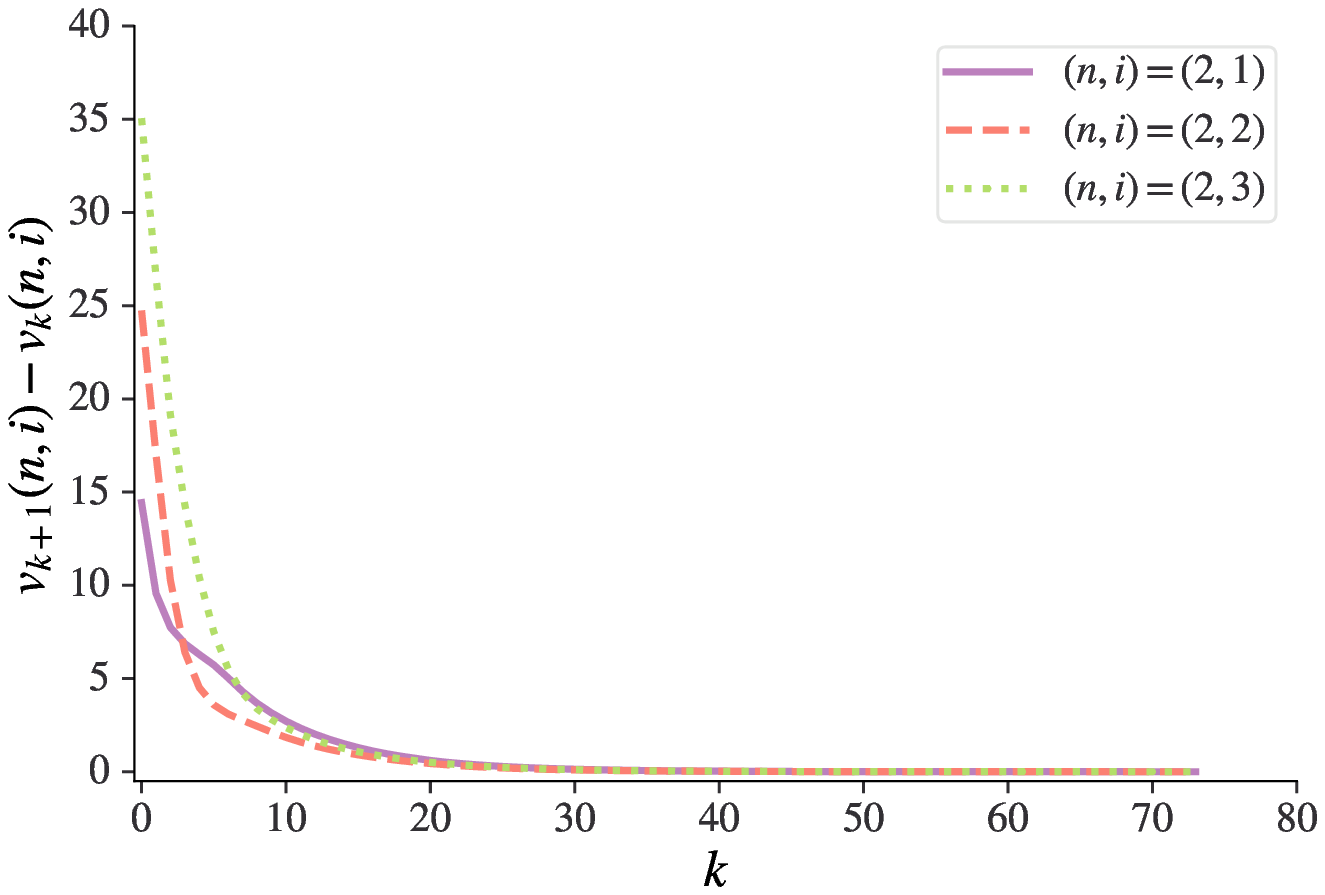}}
		}
	\subfloat [\centering Impact of $\alpha$ on the convergence speed.]
		{ 
			{\includegraphics[width=7.5cm]{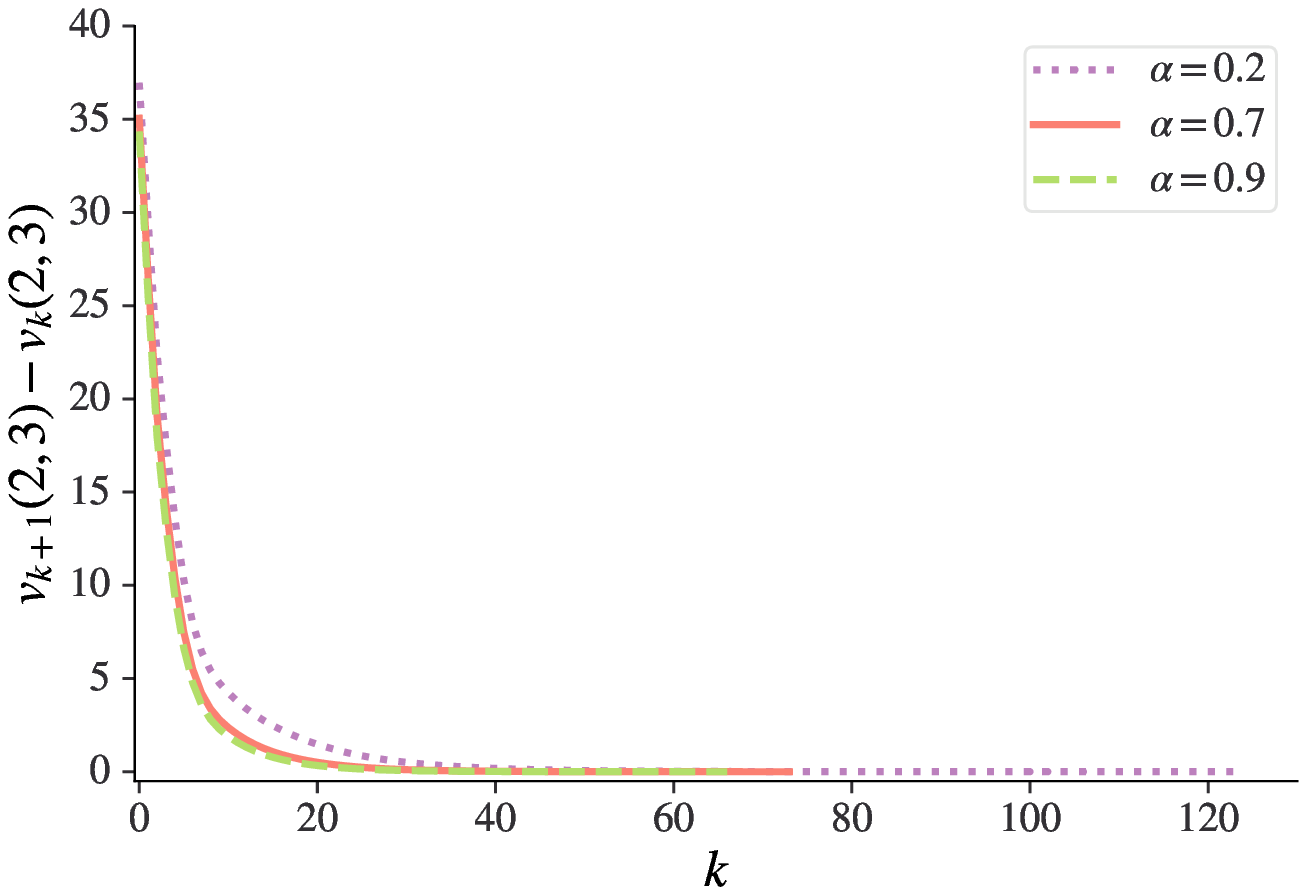}}%
		}
		\caption{$\epsilon-$convergence of $v_k$ for selected state(s) as $k$ increases, where $\epsilon = 0.001$.}
		\label{figure1}%
	\end{figure}
	
	\begin{figure}[H]
		\centering
		\includegraphics[width=0.6\textwidth]{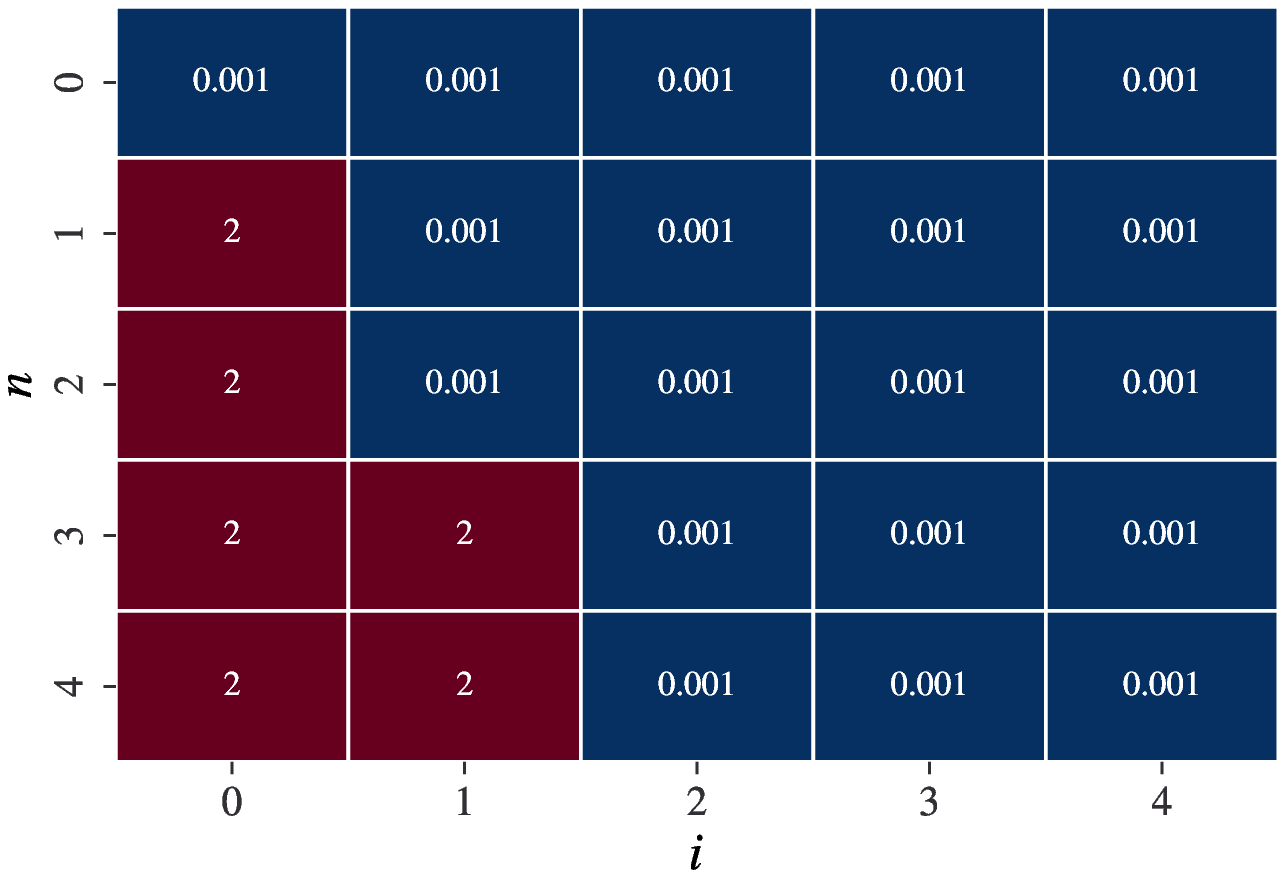}
		\caption{Optimal $\beta^*_\alpha$ for each state $(n,i)$ obtained from the discounted cost value iteration.}
		\label{figure2}
	\end{figure}
	
	The optimal policy table for the discounted-cost criterion using the input values above is shown in Figure \ref{figure2}. A lower production rate is advised in most states where the majority of customer demands can be fulfilled using the existing inventory. We notice that a high production rate is optimal for some states where there is zero/low inventory. As $n$ increases, there is a need to produce more items per unit time, as reflected in the optimal policy for such states. We also see that the optimal policy is in accordance with the current state as well as future transitions. It is important to note that the optimal policy has a strong correlation with the service rate of the system $\mu$. As $\mu$ increases, the expected service time reduces and thus the production frequency should be increased in consideration of future demands. This effect of $\mu$ is evident in the optimal policy tables given in Figure \ref{figure3}.
	
	\begin{figure}%
		\centering
	\subfloat [\centering $\mu=3$] 
		{
			{\includegraphics[width=7.5cm]{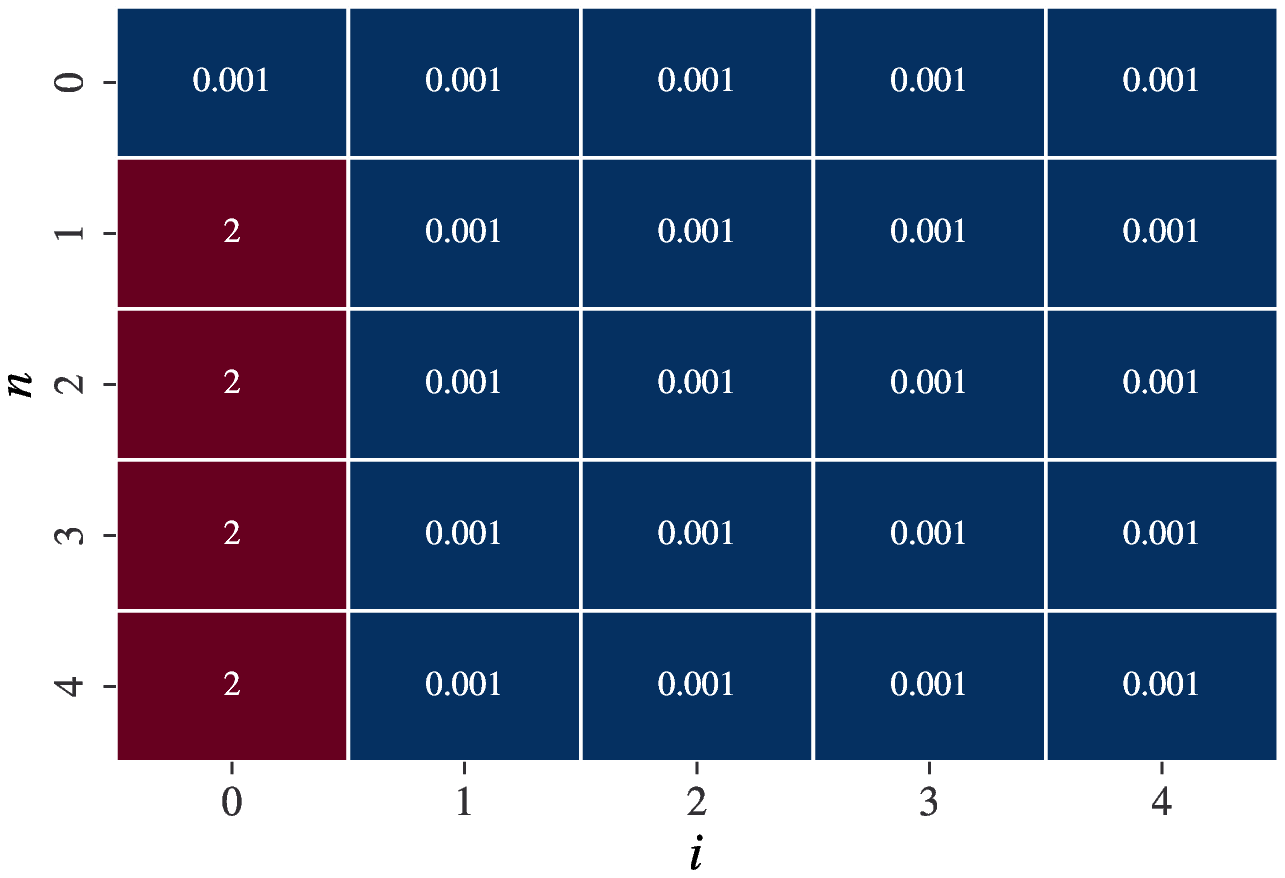}}%
		}
	\subfloat [\centering $\mu=10$] 
		{
			{\includegraphics[width=7.5cm]{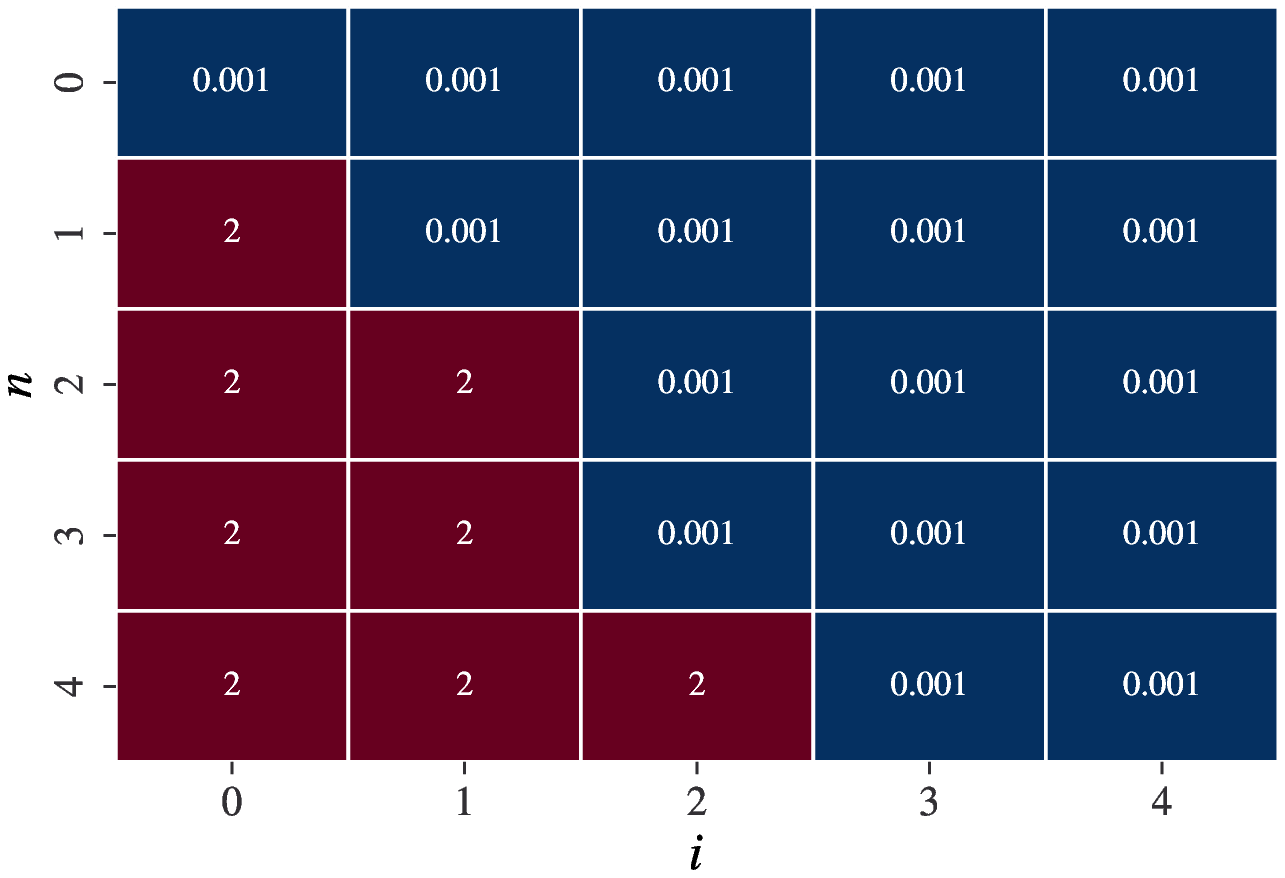}}%
		}
		\caption{Optimal policy tables for varying $\mu$.}
		\label{figure3}%
	\end{figure}

	\subsection{\textbf{The Discounted-Cost Policy Iteration Algorithm:}}Now if the state and action spaces are both finite then using Lemma \ref{L6} and Theorem \ref{T4} below, one can find an optimal production rate $\beta_\alpha^{*}$ by using the policy iteration algorithm given below.\\
	In order to solve the discounted-cost problem through the policy iteration algorithm, we define some sets. For every $\beta\in \mathscr{U}_{SM}$, $(n,i)\in \mathbb{N}_0\times\mathbb{N}_0$, and $\tilde{\beta}\in [\gamma,R]$, let
	\begin{equation}\label{EQ7.13}
	D_{\beta}(n,i,\tilde{\beta}):=r(n,i,\tilde{\beta})+\sum_{(m,j)\in\mathbb{N}_0 \times \mathbb{N}_0}\pi^{\tilde{\beta}}_{(n,i)(m,j)}I^{\beta}_{\alpha}(m,j)
	\end{equation}
	and
	\begin{equation}\label{E5.3}
	E_\beta(n,i):=\{\tilde{\beta}\in [\gamma,R]:D_\beta(n,i,\tilde{\beta})<\alpha I^{\beta}_{\alpha}(n,i) \}.
	\end{equation}
	We then define an improvement policy $\beta^{'}\in \mathscr{U}_{SM}$ (depending on $\beta$) as follows:\\
	\begin{equation}\label{EQ7.15}
	\beta^{'}(n,i)\in E_\beta(n,i)~\text{if}~E_\beta(n,i)\neq \emptyset~\text{and}~\beta^{'}(n,i):=\beta(n,i)~\text{if}~E_\beta(n,i)=\emptyset.
	\end{equation}
	\textbf{Note:}
	Now if the number of customers is $m$ and the number of items is also $m$, then corresponding to fixed $\beta\in \mathscr{U}_{SM}$, let $I$ is the ${m^2\times m^2}$ standard identity matrix. Also, define $\hat{I}^{\beta}_{\alpha}:=[{I}^{\beta}_{\alpha}(n,i)]_{m^2\times 1}$ and $\hat{r}(\beta):=[r(n,i,\beta)]_{m^2\times 1}$ are column vectors (here $\beta$ is fixed but $(n,i)$ will vary).\\
	Next we state a Lemma whose proof is in [Guo and Hern$\acute{\rm a}$ndez-Lerma (2009), Lemma 4.16, Lemma 4.17].
	\begin{lemma}\label{L6}
		Suppose that Assumption \ref{A5} holds.
		Then for the finite CTMDP model, $I^\beta_{\alpha}$ is a unique bounded solution to the equation
		\begin{equation}\label{EQ7.12}
		\alpha u(n,i)=r(n,i,\beta)+\sum_{(m,j)\in\mathbb{N}_0 \times \mathbb{N}_0}\pi^{{\beta}}_{(n,i)(m,j)}u(m,j)~\forall (n,i)\in \mathbb{N}_0\times\mathbb{N}_0,~\text{ for every}~\beta\in \mathscr{U}_{SM}.
		\end{equation}
		Also, for any given $\beta\in \mathscr{U}_{SM}$, let $\beta^{'}\in \mathscr{U}_{SM}$ be defined as in (\ref{EQ7.15}) and suppose that $\beta^{'}\neq \beta$. Then $I_\alpha^\beta\geq I_\alpha^{\beta^{'}}$.
	\end{lemma}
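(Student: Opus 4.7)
The plan is to reduce both parts to elementary resolvent-positivity facts, exploiting the finiteness of the state and action spaces granted in this lemma. Writing $\hat{I}_\alpha^\beta$ for the column vector of values of $I_\alpha^\beta(\cdot)$ and $Q(\beta) = [\pi^\beta_{(n,i)(m,j)}]$ for the finite generator matrix, I note that $Q(\beta)$ is conservative with nonnegative off-diagonal entries, so by Gershgorin its spectrum lies in $\{z : \mathrm{Re}\, z \leq 0\}$. Hence $\alpha I - Q(\beta)$ is nonsingular for every $\alpha > 0$, and
\begin{equation*}
(\alpha I - Q(\beta))^{-1} = \int_0^\infty e^{-\alpha t}\, e^{tQ(\beta)}\, dt
\end{equation*}
is a matrix with nonnegative entries, since $e^{tQ(\beta)}$ is stochastic.

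For part (a), I would first verify that $I_\alpha^\beta$ as defined by \eqref{main} satisfies \eqref{EQ7.12}. Applying Dynkin's formula (equivalently, conditioning on the first exponentially distributed jump time of $Y$ under $P^\beta_{(n,i)}$) to
\begin{equation*}
M_t := e^{-\alpha t} I_\alpha^\beta(Y(t)) + \int_0^t e^{-\alpha s} r(Y(s), \beta(Y(s)))\, ds
\end{equation*}
shows that $M$ is a $P^\beta_{(n,i)}$-martingale; letting $t \to \infty$ and using the uniform boundedness of $I_\alpha^\beta$ (finite state space plus Assumption \ref{A5} (b)) to handle the terminal term yields \eqref{EQ7.12}. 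In matrix form this reads $(\alpha I - Q(\beta))\hat{I}_\alpha^\beta = \hat{r}(\beta)$, and the invertibility from the previous paragraph gives the unique bounded solution $\hat{I}_\alpha^\beta = (\alpha I - Q(\beta))^{-1} \hat{r}(\beta)$.

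For part (b), I would combine the definitions \eqref{EQ7.13}--\eqref{EQ7.15}: on states where $E_\beta(n,i) \neq \emptyset$ one has $D_\beta(n,i,\beta'(n,i)) < \alpha I_\alpha^\beta(n,i)$ by construction of $\beta'$, while on the remaining states $\beta'(n,i) = \beta(n,i)$ and $D_\beta(n,i,\beta(n,i)) = \alpha I_\alpha^\beta(n,i)$ by part (a). In either case
\begin{equation*}
r(n,i,\beta'(n,i)) + \sum_{(m,j)} \pi^{\beta'}_{(n,i)(m,j)} I_\alpha^\beta(m,j) \leq \alpha I_\alpha^\beta(n,i),
\end{equation*}
i.e.\ $\hat{r}(\beta') \leq (\alpha I - Q(\beta'))\hat{I}_\alpha^\beta$. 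Since part (a) applied to $\beta'$ yields $\hat{r}(\beta') = (\alpha I - Q(\beta'))\hat{I}_\alpha^{\beta'}$, multiplying through by the nonnegative resolvent $(\alpha I - Q(\beta'))^{-1}$ preserves the inequality and gives $\hat{I}_\alpha^{\beta'} \leq \hat{I}_\alpha^\beta$, as required.

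The main obstacle is the bootstrap in part (a): the matrix-inversion argument only delivers uniqueness once \eqref{EQ7.12} has been verified for the probabilistic object $I_\alpha^\beta$, and this in turn requires showing that $e^{-\alpha t} E^\beta_{(n,i)} I_\alpha^\beta(Y(t)) \to 0$ so that the martingale identity passes to the infinite-horizon limit. In the finite-state setting this is immediate from uniform boundedness of $I_\alpha^\beta$, so once that observation is made both (a) and (b) become essentially linear-algebraic.
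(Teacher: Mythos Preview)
The paper does not actually prove this lemma: it simply cites Lemmas 4.16 and 4.17 of Guo and Hern\'andez-Lerma (2009). Your proposal, by contrast, supplies a self-contained elementary argument via resolvent positivity, which is perfectly valid in the finite-state setting and arguably more transparent than deferring to the reference. The improvement step in part (b)---rewriting the policy-improvement inequality as $\hat r(\beta') \le (\alpha I - Q(\beta'))\hat I_\alpha^\beta$ and then left-multiplying by the entrywise nonnegative matrix $(\alpha I - Q(\beta'))^{-1} = \int_0^\infty e^{-\alpha t} e^{tQ(\beta')}\,dt$---is clean and correct.

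One expository point in part (a) deserves tightening. You write that ``letting $t\to\infty$ \dots\ yields \eqref{EQ7.12},'' but the long-time limit of the martingale identity $E^\beta_{(n,i)}[M_t]=I_\alpha^\beta(n,i)$ only recovers the \emph{definition} of $I_\alpha^\beta$, not the local equation \eqref{EQ7.12}. The equation itself comes from the first-jump decomposition you mention parenthetically (or, equivalently, from differentiating the martingale identity at $t=0$): if $T_1\sim\mathrm{Exp}(q)$ with $q=-\pi^\beta_{(n,i)(n,i)}$, conditioning on $T_1$ in the definition of $I_\alpha^\beta$ gives $(\alpha+q)I_\alpha^\beta(n,i)=r(n,i,\beta)+\sum_{(m,j)\neq(n,i)}\pi^\beta_{(n,i)(m,j)}I_\alpha^\beta(m,j)$, which is \eqref{EQ7.12}. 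Once that is in hand, your matrix-inversion argument delivers uniqueness immediately, and the $t\to\infty$ limit is unnecessary. With that clarification the argument is complete.
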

\noindent	\textbf{The Policy Iteration Algorithm  4.1:}\label{Al2}\\
	{\bf Step 1:} Pick an arbitrary $\beta\in \mathscr{U}_{SM}$. Let $k=0$ and take $\beta_k:=\beta$\\
	{\bf Step 2:} (Policy evaluation) Obtain $\hat{I}^{\beta_k}_{\alpha}=[\alpha I-Q(\beta_k)]^{-1}\hat{r}(\beta_k)$ (by Lemma \ref{L6}), where $Q(\beta_k)=\left[\pi^{\beta_k}_{(n,i)(m,j)}\right]$ as defined on p. 11, $I$ is the identity matrix, $\hat{I}^{\beta_k}_{\alpha}$ and $\hat{r}$ are column vecors.\\
	{\bf Step 3} (Policy improvement) Obtain a policy $\beta_{k+1}$ from (\ref{EQ7.15}) (with $\beta_k$ and $\beta_{k+1}$ in lieu of $\beta$ and $\beta^{'}$, respectively.\\
	{\bf Step 4:} If $\beta_{k+1}=\beta_k$, then stop because $\beta_{k+1}$ is discounted-cost optimal (by Theorem \ref{T4} below). Otherwise, increase $k$ by 1 and return to Step 2.\\
	To get the optimal policy from the above policy iteration algorithm, we prove the following Theorem. 
	\begin{thm}\label{T4}
		Suppose that Assumption \ref{A5} holds.
		Then for each fixed discounted factor $\alpha>0$, the discounted-cost policy iteration algorithm yields a discounted-cost optimal stationary policy in a finite number of iterations.
	\end{thm}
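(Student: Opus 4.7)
The plan is to combine the monotonicity of the policy-evaluation step (Lemma \ref{L6}) with the finiteness of $\mathscr{U}_{SM}$ in the finite CTMDP setting, and then to recognize that termination produces a solution to the DCOE (\ref{EQ5.7}), which by Theorem \ref{T6} is necessarily the optimal value function.

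\textbf{Step 1 (Finiteness of the policy set).} Because both the state space and the action space $[\gamma,R]$ are assumed finite here, the set $\mathscr{U}_{SM}$ of deterministic stationary policies is a finite Cartesian product of finite sets and hence is itself finite. This is the ingredient that lets ``monotone improvement'' be upgraded to ``termination in finitely many iterations.''

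\textbf{Step 2 (Strict improvement whenever the policy changes).} Given $\beta_k$, one forms $\beta_{k+1}$ via (\ref{EQ7.15}). If $\beta_{k+1}\neq \beta_k$, then by definition $E_{\beta_k}(n,i)\neq \emptyset$ for at least one state $(n,i)$, so $D_{\beta_k}(n,i,\beta_{k+1}(n,i))<\alpha I^{\beta_k}_\alpha(n,i)$ there, while on the complement of this set $\beta_{k+1}$ agrees with $\beta_k$. Substituting this inequality into the fixed-point equation (\ref{EQ7.12}) that characterizes $I^{\beta_{k+1}}_\alpha$ (uniqueness part of Lemma \ref{L6}) and iterating, one obtains $I^{\beta_{k+1}}_\alpha\leq I^{\beta_k}_\alpha$ everywhere with strict inequality at $(n,i)$. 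In particular $I^{\beta_{k+1}}_\alpha\neq I^{\beta_k}_\alpha$, so policies cannot repeat along the iteration. This is the main obstacle: one has to turn the weak inequality $I^{\beta}_\alpha\geq I^{\beta'}_\alpha$ supplied by Lemma \ref{L6} into a guarantee that no policy recurs, which is exactly what the strict-improvement argument at the improving state provides.

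\textbf{Step 3 (Termination).} Combining Steps 1 and 2, the sequence of value vectors $\{I^{\beta_k}_\alpha\}$ is strictly decreasing in the partial order on $\mathbb{R}^{|\Omega|}$ and lives in a finite set (since each $\beta_k$ comes from the finite set $\mathscr{U}_{SM}$). Hence after at most $|\mathscr{U}_{SM}|$ iterations the stopping criterion $\beta_{k+1}=\beta_k$ must be met.

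\textbf{Step 4 (Optimality at termination).} When the algorithm stops at $\beta_k$, the definition (\ref{EQ7.15}) forces $E_{\beta_k}(n,i)=\emptyset$ for every $(n,i)$, i.e.,
\[
r(n,i,\tilde\beta)+\sum_{(m,j)}\pi^{\tilde\beta}_{(n,i)(m,j)}I^{\beta_k}_\alpha(m,j)\ \geq\ \alpha I^{\beta_k}_\alpha(n,i)\qquad \forall\,\tilde\beta\in[\gamma,R],
\]
with equality at $\tilde\beta=\beta_k(n,i)$ by Lemma \ref{L6} (equation (\ref{EQ7.12})). Thus $I^{\beta_k}_\alpha$ solves the DCOE (\ref{EQ5.7}), and $\beta_k$ attains the infimum. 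By Theorem \ref{T6}(b)-(c), the DCOE has $u^*=I^*_\alpha$ as its solution and any minimizer of its right-hand side is discounted-cost optimal. Therefore $I^{\beta_k}_\alpha=I^*_\alpha$ and $\beta_k$ is the desired discounted-cost optimal stationary policy, which completes the proof.
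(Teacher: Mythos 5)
Your proposal is correct and takes essentially the same route as the paper's proof: monotone policy improvement via Lemma \ref{L6} together with the finiteness of $\mathscr{U}_{SM}$ forces termination in finitely many iterations, and the terminal policy satisfies the DCOE, from which optimality follows. You in fact make explicit a detail the paper glosses over---the strict decrease of the value function at an improving state, which is what actually rules out repetition of policies---and you close by invoking the verification argument inside Theorem \ref{T6}(b)--(c), where the paper instead cites Guo and Hern\'andez-Lerma (2009), Theorem 4.10; these are the same reasoning.
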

	\begin{proof}
		Let $\{\beta_k\}$ be the sequence of polices in the discounted-cost policy iteration algorithm above. Then, by Lemma \ref{L6}, we have $I_\alpha^{\beta_k}\succeq I^{\beta_{k+1}}_\alpha$. Thus, each policy in the sequence $\{\beta_k, k=0,1,\cdots\}$ is different. Since the number of polices is finite, the iterations must stop after a finite number. Suppose that the algorithm stops at a policy denoted by $\beta^*_\alpha$. Then $\beta^*_\alpha$ satisfies the optimality equation
		\begin{align}\label{EQ7.3}
		\alpha I_\alpha^*(n,i)=\inf_{\tilde{\beta}\in [\gamma,R]}\biggl[{r(n,i,\tilde{\beta})}+ \mathop{\sum}\limits_{(m,j)}\pi^{\tilde{\beta}}_{(n,i)(m,j)}	I_\alpha^*{(m,j)}  \biggr ].
		\end{align}
		Thus, by [Guo and Hern$\acute{\rm a}$ndez-Lerma (2009), Theorem 4.10], $\beta^*_\alpha$ is discounted-cost optimal.
	\end{proof}
	
\noindent \textbf{Note:} As expected, the above algorithm with a discrete action space given by\\
$\{0.001, 0.002, 0.003, \dots, 1.999, 2\}$ provides the same optimal solution as the value iteration algorithm in Figure \ref{figure2}. Differences may appear in the optimal policy tables (corresponding to each algorithm) when there are alternate optimal production rates for one or more states. The speed of convergence of the policy iteration depends on the initial choice of arbitrary $\beta(n,i)$ for each state $(n,i)$.
	
	\normalsize

	\section{Analysis of Ergodic Cost Criterion}
	In this section  we prove that under Assumptions \ref{A5}, \ref{A2} and \ref{A3}, the average cost optimality equation (ACOE) (or HJB equation) given by \eqref{EQ8.1} has a solution. Also, we find the optimal stationary policy by using policy iteration algorithm for this cost criterion.\\

Next we prove the optimality theorem for ergodic cost criterion.
	\begin{thm}\label{T9}
		Suppose that Assumptions \ref{A5}, \ref{A2} and \ref{A3} hold. Then:
		\begin{itemize}
			\item [(a)] There exists a solution $(g^*,\tilde{u})\in \mathbb{R}\times B_W(\mathbb{N}_0 \times \mathbb{N}_0)$ to the ACOE
				\begin{align}\label{EQ8.1}
	g^*=\inf_{\tilde{\beta} \in [\gamma,R]}\biggl\{r(n,i,\tilde{\beta})+\sum_{(m,j)}\tilde{u}(m,j)\pi_{(n,i)(m,j)}^{\tilde{\beta}}\biggr\}~\forall (n,i)\in \mathbb{N}_0\times\mathbb{N}_0.
	\end{align}
	Moreover, the constant $g^*$ coincides with the optimal average cost function $J^*$, i.e., $$g^*=J^*(n,i)~\forall (n,i)\in \mathbb{N}_0\times\mathbb{N}_0,$$ and $\tilde{u}$ is unique up to additive constants.
			\item [(b)] A stationary  policy $\beta^*\in \mathscr{U}_{SM}$ is AC optimal iff it attains the minimum in ACOE (\ref{EQ8.1}) i.e.,
				\begin{align}\label{EQ8.2}
	g^*=\biggl\{r(n,i,\beta^*)+\sum_{(m,j)}\tilde{u}(m,j)\pi_{(n,i)(m,j)}^{\beta^*}\biggr\}~\forall (n,i)\in \mathbb{N}_0\times\mathbb{N}_0.
	\end{align} 
		\end{itemize}
	\end{thm}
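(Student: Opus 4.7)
The plan is to prove part (a) via the classical vanishing-discount method, starting from the discounted-cost optimality equation established in Theorem \ref{T6}, and then to prove part (b) by a Dynkin-type verification argument based on the ACOE. The setup is as follows: for each $\alpha>0$ let $I_\alpha^*$ denote the discounted value function from Theorem \ref{T6} and set $h_\alpha(n,i):=I_\alpha^*(n,i)-I_\alpha^*(0,0)$, so that the DCOE rewrites as
\begin{equation*}
\alpha I_\alpha^*(0,0)+\alpha h_\alpha(n,i)=\inf_{\tilde{\beta}\in[\gamma,R]}\Big\{r(n,i,\tilde{\beta})+\sum_{(m,j)}\pi^{\tilde{\beta}}_{(n,i)(m,j)}h_\alpha(m,j)\Big\}.
\end{equation*}
The decisive uniform estimates come from Assumption \ref{A3}: writing $g(\beta):=\vartheta_\beta(r(\cdot,\cdot,\beta))$ and integrating the bound $|E^\beta_{(n,i)}r(Y(t),\beta(Y(t-)))-g(\beta)|\le L_2 M e^{-\delta t}W(n,i)$ against $e^{-\alpha t}$ yields $|I_\alpha^{\beta}(n,i)-g(\beta)/\alpha|\le (L_2 M/\delta)\,W(n,i)$ uniformly in $\alpha$ and $\beta$. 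Combined with Assumption \ref{A5}(a), which gives $\vartheta_\beta(W)\le b_1/c_1$ and hence $0\le g(\beta)\le Mb_1/c_1$, this provides (i) $\alpha I_\alpha^*(0,0)$ bounded as $\alpha\downarrow 0$, and (ii), by subtracting the estimates at $(n,i)$ and at $(0,0)$ along a discounted optimizer $\beta_\alpha^*$ so that the $g(\beta_\alpha^*)/\alpha$ terms cancel, the $\alpha$-uniform bound $|h_\alpha(n,i)|\le (2L_2M/\delta)\,W(n,i)$ (using $W\ge 1$ and monotonicity).

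With these bounds in hand, since $\mathbb{N}_0\times\mathbb{N}_0$ is countable a Cantor diagonal argument produces a subsequence $\alpha_k\downarrow 0$ along which $\alpha_k I_{\alpha_k}^*(0,0)\to g^*\ge 0$ and $h_{\alpha_k}\to\tilde{u}$ pointwise, with $\tilde{u}\in B_W(\mathbb{N}_0\times\mathbb{N}_0)$. To pass to the limit inside both the infinite sum and the infimum in the displayed equation above, I would invoke the generalized Fatou/dominated-convergence lemma of Guo and Hern\'{a}ndez-Lerma (2009), Proposition A.4, using the $W$-dominated bound on $h_{\alpha_k}$, the drift inequality in Assumption \ref{A5}(a), continuity of $r(n,i,\cdot)$ and of $\pi^{\cdot}_{(n,i)(m,j)}$, and compactness of $[\gamma,R]$; the measurable-selection theorem then furnishes a stationary minimizer $\beta^*$. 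Since $\alpha_k h_{\alpha_k}(n,i)\to 0$, the left side converges to $g^*$, and we obtain the ACOE (\ref{EQ8.1}).

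For part (b), the argument is verification via Dynkin's formula applied to $\tilde{u}\in B_W$ along the chain under any $\beta\in\mathscr{U}_{SM}$, combined with the ACOE inequality, yielding
\begin{equation*}
\frac{1}{T}\big[E^\beta_{(n,i)}\tilde{u}(Y(T))-\tilde{u}(n,i)\big]\le \frac{1}{T}E^\beta_{(n,i)}\int_0^T r(Y(t),\beta(Y(t-)))\,dt-g^*,
\end{equation*}
with equality when $\beta$ attains the minimum in the ACOE. Uniform ergodicity (Assumption \ref{A3}) forces $E^\beta_{(n,i)}\tilde{u}(Y(T))/T\to 0$ as $T\to\infty$ because $\tilde{u}\in B_W$ and $\sup_{t\ge 0}E^\beta_{(n,i)}W(Y(t))\le L_2W(n,i)+\vartheta_\beta(W)$. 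Letting $T\to\infty$ therefore gives $J(n,i,\beta)\ge g^*$ for all $\beta$ with equality at a minimizer $\beta^*$, which simultaneously identifies $g^*=J^*(n,i)$ for every $(n,i)$ and establishes (b). Uniqueness of $\tilde{u}$ up to additive constants then follows by applying the same verification identity to two candidate solutions $(g^*,\tilde{u}_1),(g^*,\tilde{u}_2)$ under a common minimizer $\beta^*$, which forces $\tilde{u}_1-\tilde{u}_2$ to be $Q(\beta^*)$-harmonic in $B_W$ and hence constant by uniform ergodicity and irreducibility (Condition A).

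The main obstacle will be step (ii) above: establishing the $\alpha$-uniform $W$-weighted bound on the relative value function $h_\alpha$. This is precisely where Assumption \ref{A3} is indispensable---without the exponential-in-$t$, $W$-weighted deviation estimate on the cost process it is impossible to control $h_\alpha$ uniformly in $\alpha$, and the diagonal extraction needed to produce a limiting pair $(g^*,\tilde{u})$ would fail. The remaining steps (limit passage, measurable selection, verification, uniqueness) are then standard consequences of the continuity/compactness structure set up in Section 3 and the Foster-Lyapunov drift inequality in Assumption \ref{A5}.
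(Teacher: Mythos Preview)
Your approach is essentially the same as the paper's: both use the vanishing-discount method, centering the discounted value function at a reference state, extracting a subsequence $\alpha_k\downarrow 0$ along which $\alpha_k I_{\alpha_k}^*(n_0,i_0)\to g^*$ and $h_{\alpha_k}\to\tilde{u}$, and then passing to the limit in the DCOE. The difference is only in presentation: where the paper simply invokes \textnormal{[Guo and Hern\'andez-Lerma (2009), Lemma 7.7 and Proposition 7.3]} for the uniform $W$-bounds on $h_\alpha$ and for the verification step, you derive those bounds explicitly from Assumption \ref{A3} and run the Dynkin argument by hand. That is fine, and your estimate $|h_\alpha|\le (2L_2M/\delta)W$ is exactly the content of Lemma 7.7 specialized to this model.

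There is, however, one genuine gap in your treatment of part (b): you establish only the sufficiency direction. Your Dynkin inequality yields $J(n,i,\beta)\ge g^*$ for every $\beta$, with equality when $\beta$ attains the minimum in the ACOE, but it does \emph{not} show that a policy failing to attain the minimum must have $J(n,i,\beta)>g^*$. The paper handles the necessity direction separately, by contradiction: if $\beta^*$ is AC-optimal but fails to realize the infimum at some state $(n',i')$, then there is a slack $d>0$ at that state, and integrating the resulting inequality against the invariant measure $\vartheta_{\beta^*}$ (which is strictly positive everywhere by irreducibility, Condition A) gives $g^*\le g(\beta^*)-d\,\vartheta_{\beta^*}(n',i')<g(\beta^*)$, contradicting optimality. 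You should add this short argument; your harmonic-function reasoning for uniqueness already uses irreducibility, so the ingredient is in place.
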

	\begin{proof}
	We prove part (a) and (b) together. Take the $\alpha$-discounted cost optimal stationary policy $\beta^*_\alpha$ as in Theorem \ref{T6}. Hence $I_\alpha^{\beta^*_\alpha}(n,i)=I_\alpha^{*}(n,i)$. Now define $u_\alpha^{\beta^*_\alpha}(n,i):=I_\alpha^{\beta^*_\alpha}(n,i)-I_\alpha^{\beta^*_\alpha}(n_0,i_0)$, where $(n_0,i_0)$ is a fixed reference state. Now we apply the vanishing discounted approach. By [Guo and Hern$\acute{\rm a}$ndez-Lerma (2009), Lemma 7.7, Proposition A.7], we get a sequence $\{\alpha_k\}$ of discounted factors such that $\alpha_k\downarrow 0$, a constant $g^*$ and a function $\bar{u}\in B_W(\mathbb{N}_0 \times \mathbb{N}_0)$ such  that
		\begin{align}\label{EQ8.4}
		\lim_{k\rightarrow\infty}\alpha_k I^*_{\alpha_k}(n_0,i_0)=g^*~\text{and}~\lim_{k\rightarrow\infty}u^{\beta^*_{\alpha_k}}_{\alpha_k}(n,i)=\bar{u}(n,i)~\forall (n,i)\in \mathbb{N}_0 \times \mathbb{N}_0.
		\end{align}
		Now for all $k\geq 1$ and $(n,i)\in \mathbb{N}_0 \times \mathbb{N}_0$, by Theorem \ref{T6}, we have
		$$\frac{\alpha_k I^*_{\alpha_k}(n_0,i_0)}{R+\lambda+\mu}+\frac{\alpha_n u^{\beta^*_{\alpha_k}}_{\alpha_k}(n_0,i_0)}{R+\lambda+\mu}+u^{\beta^*_{\alpha_k}}_{\alpha_k}(n,i)\leq \frac{r(n,i,\tilde{\beta})}{R+\lambda+\mu}+\sum_{(m,j)}u^{\beta^*_{\alpha_k}}_{\alpha_k}(n,i)\biggl[\frac{\pi_{(n,i)(m,j)}^{\tilde{\beta}}}{R+\lambda+\mu}+\delta_{(n,i)(m,j)}\biggr]$$ for all $(n,i,\tilde{\beta})\in K$.
		Using this and (\ref{EQ8.4}), we get
		\begin{align*}
		\frac{g^*}{R+\lambda+\mu}+\bar{u}(n,i)\leq \frac{r(n,i,\tilde{\beta})}{R+\lambda+\mu}+\sum_{(m,j)}\bar{u}(m,j)\biggl[\frac{\pi_{(n,i)(m,j)}^{\tilde{\beta}}}{R+\lambda+\mu}+\delta_{(n,i)(m,j)}\biggr]
		\end{align*}
		for all $(n,i,\tilde{\beta})\in K$.
		Thus we get
		\begin{equation}\label{EN8.6}
		g^*\leq \inf_{\tilde{\beta} \in [\gamma,R]}\biggl\{r(n,i,\tilde{\beta})+\sum_{(m,j)}\bar{u}(m,j)\pi^{\tilde{\beta}}_{(n,i)(m,j)}\biggl\}.
		\end{equation}
		Now there exists $\beta_k\in \mathscr{U}_{SM}$ such that for all $(n,i)\in \mathbb{N}_0 \times \mathbb{N}_0$, we have
		\begin{align}\label{EQN8.6}
		\frac{\alpha_k I^*_{\alpha_k}(n,i_0)}{R+\lambda+\mu}+\frac{\alpha_k u^{\beta^*_{\alpha_k}}_{\alpha_k}(n,i_0)}{R+\lambda+\mu}+u^{\beta^*_{\alpha_k}}_{\alpha_k}(n,i)= \frac{r(n,i,\beta_{k})}{R+\lambda+\mu}+\sum_{(m,j)}u^{\beta^*_{\alpha_k}}_{\alpha_k}(n,i)\biggl[\frac{\pi_{(n,i)(m,j)}^{\beta_{k}}}{R+\lambda+\mu}+\delta_{(n,i)(m,j)}\biggr].
		\end{align}
		Since $\mathscr{U}_{SM}$ is compact, there exists $\beta^{'}\in \mathscr{U}_{SM}$ such that $$\lim_{k\rightarrow\infty}\beta_{k}(n,i)=\beta^{'}(n,i)~\forall (n,i)\in \mathbb{N}_0\times\mathbb{N}_0.$$ So, by the dominated convergence theorem, taking $k\rightarrow\infty$ in (\ref{EQN8.6}), we get
		$$\frac{g^*}{R+\lambda+\mu}+\bar{u}(n,i)= \frac{r(n,i,\beta^{'})}{R+\lambda+\mu}+\sum_{(m,j)}\biggl[\frac{\pi_{(n,i)(m,j)}^{\beta^{'}}}{R+\lambda+\mu}+\delta_{(n,i)(m,j)}\biggr]\bar{u}(m,j)$$ for all $(n,i)\in \mathbb{N}_0\times\mathbb{N}_0$.
		Hence we get
		\begin{align}\label{EQ8.5}
		g^*&= r(n,i,\beta^{'})+\sum_{(m,j)}\pi^{\beta^{'}}_{(n,i)(m,j)}\bar{u}(m,j)\nonumber\\
		&\geq \inf_{\tilde{\beta} \in [\gamma,R]}\biggl\{r(n,i,\tilde{\beta})+\sum_{(m,j)}\pi^{\tilde{\beta}}_{(n,i)(m,j)}\bar{u}(m,j)\biggl\}.
		\end{align}
		From (\ref{EN8.6}) and (\ref{EQ8.5}), we get (\ref{EQ8.1}).
		Now we prove that $g^{*}=J^*(n,i)$ for every $(n,i)\in \mathbb{N}_0\times\mathbb{N}_0$.
		Take an arbitrary $\beta\in \mathscr{U}_{SM}$. Then from (\ref{EQ8.1}), we get for $\beta\in\mathscr{U}_{SM}$,
		$$g^*\leq \biggl\{r(n,i,\beta)+\sum_{(m,j)}\pi^{\beta}_{(n,i)(m,j)}\bar{u}(m,j)\biggl\}~\forall (n,i)\in \mathbb{N}_0\times\mathbb{N}_0.$$ Then by [Guo and Hern$\acute{\rm a}$ndez-Lerma (2009), Proposition 7.3], we get $g^*\leq J(n,i,\beta)$. Hence $g^*\leq J^{*}(n,i)$ for every $(n,i)\in \mathbb{N}_0\times\mathbb{N}_0$. Now there exists $\beta^*\in \mathscr{U}_{SM}$ for which
		$$g^*= \biggl\{r(n,i,\beta^*)+\sum_{(m,j)}\bar{u}(m,j)\pi^{\beta^*}_{(n,i)(m,j)}\biggr\}~\forall (n,i)\in \mathbb{N}_0\times\mathbb{N}_0.$$ Hence, by [Guo and Hern$\acute{\rm a}$ndez-Lerma (2009), Proposition 7.3], we get $g^{*}=J(n,i,\beta^*)$. Hence $g^{*}=J(n,i,\beta^*)=J^{*}(n,i)$ for all $(n,i)\in \mathbb{N}_0\times\mathbb{N}_0$. Consequently, $\beta^*$ is AC-optimal.\\
		Now by [Guo and Hern$\acute{\rm a}$ndez-Lerma (2009), (7.3)], we have
		\begin{align}\label{EN8.9}
		J(n,i,\beta)=\sum_{(m,j)}r(m,j,\beta)\vartheta_\beta(m,j)=g(\beta)~\forall f\in \mathscr{U}_{SM}~\text{and}~(n,i)\in \mathbb{N}_0\times\mathbb{N}_0,
		\end{align} 
		where $g(\beta):=\sum_{(m,j)}r(m,j,\beta)\vartheta_\beta(m,j)$.\\
		Now we prove the necessary part for a determistic stationary policy to be AC optimal by contradiction. So, suppose that $\beta^{*}\in \mathscr{U}_{SM}$ is an AC optimal that dose not attain the minimum in the ACOE (\ref{EQ8.1}). Then there exist $(n^{'},i^{'})\in \mathbb{N}_0\times\mathbb{N}_0$ and a constant $d>0$ (depending on $(n^{'},i^{'})$ and $\beta^*$) such that
		\begin{align}\label{EQ8.6}
		g^*\leq r(n,i,\beta^*)-d\delta_{(n^{'},i^{'})(m,j)}+\sum_{(m,j)}\pi^{\beta^*}_{(n,i)(m,j)}\bar{u}(m,j)~\forall (n,i)\in \mathbb{N}_0\times\mathbb{N}_0.
		\end{align}
		By the irreducibility condition of the transition rates, the invariant measure $\vartheta_{\beta^*}$ of $p((n,i),t,(m,j),\beta^*)$ is supported on all of $\mathbb{N}_0\times\mathbb{N}_0$, meaning that $\vartheta_{\beta^*}(m,j)>0$ for every $(m,j)\in \mathbb{N}_0\times\mathbb{N}_0$. So, as in the proof of (\ref{EN8.9}), from (\ref{EQ8.6}) and [Guo and Hern$\acute{\rm a}$ndez-Lerma (2009), Proposition 7.3], we have
		\begin{equation}\label{EQ8.7}
		g^*\leq g(\beta^*)-d \vartheta_{\beta^*}(n^{'},i^{'})< g(\beta^*),
		\end{equation}
		which is a contradiction. So, $\beta^*$ is AC-optimal.\\
		By similar argumets as in [Guo and Hern$\acute{\rm a}$ndez-Lerma (2009), Theorem 7.8], we get the uniqueness of the solution of ACOE (\ref{EQ8.1}).
	\end{proof}
	\textbf{The Bias of a stationary policy:}
	Let $\beta\in \mathscr{U}_{SM}$. We say that a pair $(g^{'},h_\beta)\in \mathbb{R}\times B_W(\mathbb{N}_0 \times \mathbb{N}_0)$ is a solution to the Poisson equation for $\beta\in \mathscr{U}_{SM}$ if
	$$	g^{'}=r(n,i,\beta)+\sum_{(m,j)}h_\beta(m,j)\pi^{\beta}_{(n,i)(m,j)}~\forall (n,i)\in \mathbb{N}_0\times\mathbb{N}_0.$$ Define $g(\beta)=\sum_{(m,j)}r(m,j,\beta)\vartheta_\beta(m,j)$.\\
	Then by recalling [Guo and Hern$\acute{\rm a}$ndez-Lerma (2009), (7.13)], the expected average cost (loss) of $\beta$ is \begin{align}\label{EQ6.3}
	J(n,i,\beta)=\sum_{(m,j)}r(m,j,\beta)\vartheta_\beta(m,j)=g(\beta)=\vartheta_{\beta}(r(\cdot,\cdot,\beta)),~(n,i)\in \mathbb{N}_0\times\mathbb{N}_0,
	\end{align}
	see the definition of $\vartheta_\beta$ on p. 14.\\
	Next we define the bias (or ``potential''-see [Guo and Hern$\acute{\rm a}$ndez-Lerma (2009), Remark 3.2]) of $\beta\in\mathscr{U}_{SM}$ as
	\begin{align}\label{EQ6.4}
	h_\beta(n,i):=\int_{0}^{\infty}[E^\beta_{(n,i)}r(Y(t),\beta)-g(\beta)]dt~\text{ for }
	~(n,i)\in \mathbb{N}_0\times\mathbb{N}_0.
	\end{align}\\
	Next we state a Proposition whose proof is in [Guo and Hern$\acute{\rm a}$ndez-Lerma (2009), Proposition 7.11].
	\begin{proposition}\label{P2}
		Under Assumptions \ref{A5}, \ref{A2} and \ref{A3}, for every $\beta\in \mathscr{U}_{SM}$, the solution to the Poisson equation for $\beta$ are of the form $$(g(\beta),h_\beta+z)~\text{with}~z~\text{any real number}.$$ Moreover, $(g(\beta),h_\beta)$ is the unique solution to the Poisson equation
		\begin{align}\label{EQ8.8}
		g(\beta)=r(n,i,\beta)+\sum_{(m,j)}h_\beta(m,j)\pi^{\beta}_{(n,i)(m,j)}~\forall (n,i)\in \mathbb{N}_0\times\mathbb{N}_0
		\end{align}
		for which $\vartheta_\beta(h_\beta)=0$.
	\end{proposition}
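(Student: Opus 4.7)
The plan is to take $h_\beta$ as defined by the integral in \eqref{EQ6.4}, verify it is well defined in $B_W(\mathbb{N}_0\times\mathbb{N}_0)$, confirm it satisfies the Poisson equation \eqref{EQ8.8}, establish the normalization $\vartheta_\beta(h_\beta)=0$, and then settle uniqueness up to additive constants.

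For well-posedness, note that $r(\cdot,\cdot,\beta)\in B_W$ by Assumption \ref{A5}(b), so Assumption \ref{A3} applied with $u=r(\cdot,\cdot,\beta)$ gives
\begin{equation*}
|E^\beta_{(n,i)}r(Y(t),\beta)-g(\beta)|\leq L_2 e^{-\delta t}\|r(\cdot,\cdot,\beta)\|_W W(n,i).
\end{equation*}
The integrand in \eqref{EQ6.4} is thus exponentially small in $t$, which yields the bound $|h_\beta(n,i)|\leq (L_2/\delta)\|r(\cdot,\cdot,\beta)\|_W W(n,i)$, so $h_\beta\in B_W(\mathbb{N}_0\times\mathbb{N}_0)$.

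To verify the Poisson equation, let $P_s^\beta u(n,i):=E^\beta_{(n,i)}[u(Y(s))]$ denote the semigroup associated with $\beta$, whose infinitesimal generator is $\Pi^\beta$. Using the Markov property and the substitution $u=t+s$,
\begin{align*}
P_s^\beta h_\beta(n,i)&=\int_0^\infty[E^\beta_{(n,i)}r(Y(t+s),\beta)-g(\beta)]\,dt\\
&=\int_s^\infty[E^\beta_{(n,i)}r(Y(u),\beta)-g(\beta)]\,du\\
&=h_\beta(n,i)-\int_0^s[E^\beta_{(n,i)}r(Y(u),\beta)-g(\beta)]\,du.
\end{align*}
Dividing by $s$ and letting $s\downarrow 0$ produces $\Pi^\beta h_\beta(n,i)=g(\beta)-r(n,i,\beta)$, which rearranges to \eqref{EQ8.8}. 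The normalization $\vartheta_\beta(h_\beta)=0$ follows by Fubini (justified by the exponential bound above together with $\vartheta_\beta(W)<\infty$ established earlier), since invariance of $\vartheta_\beta$ under $P_t^\beta$ gives $\sum_{(n,i)}\vartheta_\beta(n,i)E^\beta_{(n,i)}r(Y(t),\beta)=\vartheta_\beta(r(\cdot,\cdot,\beta))=g(\beta)$ for every $t\geq 0$, so the integrand in the resulting double integral vanishes.

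For uniqueness, suppose $(g',h')\in\mathbb{R}\times B_W(\mathbb{N}_0\times\mathbb{N}_0)$ is any other solution of \eqref{EQ8.8}. Subtracting the two equations gives $g(\beta)-g'=\Pi^\beta(h_\beta-h')$. Integrating against $\vartheta_\beta$ and invoking $\vartheta_\beta\Pi^\beta=0$ (the infinitesimal form of the invariance identity for $\vartheta_\beta$) forces $g'=g(\beta)$; then $h_\beta-h'$ is $\Pi^\beta$-harmonic and $W$-bounded, so Assumption \ref{A3} applied to $u=h_\beta-h'$ forces $h_\beta-h'$ to equal its constant $\vartheta_\beta$-mean. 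Hence $h'=h_\beta+z$ for a unique $z\in\mathbb{R}$. The main obstacle lies in rigorously justifying the differentiation at $s=0$ in the Markov-property step and in the characterization of $W$-bounded $\Pi^\beta$-harmonic functions as constants; both hinge on the uniform ergodicity in Assumption \ref{A3} to provide the dominating control needed to interchange limits and the infinite sums defining $\Pi^\beta$.
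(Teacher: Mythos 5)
Your proof is correct and follows essentially the same route as the paper, which offers no argument of its own but defers to [Guo and Hern\'andez-Lerma (2009), Proposition 7.11]: there too the bias is defined by the integral \eqref{EQ6.4}, the uniform exponential ergodicity bound gives $h_\beta\in B_W(\mathbb{N}_0\times\mathbb{N}_0)$, the semigroup/Markov-property computation yields the Poisson equation, and invariance of $\vartheta_\beta$ together with the ergodicity estimate gives uniqueness up to additive constants. The only small attribution slip is at the end: the dominating control needed to justify the differentiation at $s=0$ and to apply Dynkin's formula to $W$-bounded functions (including the step showing a $\Pi^\beta$-harmonic $u\in B_W$ satisfies $E^\beta_{(n,i)}u(Y(t))=u(n,i)$) comes from the drift conditions in Assumptions \ref{A5} and \ref{A2} (plus the bounded transition rates \eqref{E5.16}), while Assumption \ref{A3} supplies the exponential decay; with that reading, the technical points you flag close exactly as in the cited reference.
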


	\subsection{\bf{{The Average-Cost Policy Iteration Algorithm:}}}In view of Theorem \ref{T10} given below, one can use the policy iteration algorithm for computing the optimal production rate $\beta^{*}$ that is described as follows:\\
	\textbf{\textit{The Policy Iteration Algorithm 5.1:}}\label{Al4} 
	{\bf Step 1:} \\
	Take $k=0$ and $\beta_k\in \mathscr{U}_{SM}$.\\
	{\bf Step 2:} Solve for the invariant probability measure $\	\vartheta_{\beta_k} $ from system of equations (see [Guo and Hern$\acute{\rm a}$ndez-Lerma (2009), Remark 7.12 or Proposition C.12]) \begin{align*}
	&\sum_{(n,i)}\pi^{\beta_k}_{(n,i)(m,j)}\vartheta_{\beta_k}(n,i)=0~\text{for}~(m,j)\in \mathbb{N}_0\times\mathbb{N}_0,\\
	&\sum_{(m,j)}\vartheta_{\beta_k}(m,j)=1,
	\end{align*}
	then calculate the loss, $g(\beta_k)=\sum_{(m,j)}r(m,j,\beta_k)\vartheta_{\beta_k}(m,j)$ and finally, the bias, $h_{\beta_k}$ from the system of linear equations (see Proposition \ref{P2})
	\begin{align*}
	\displaystyle\left\{ \begin{array}{ll}	& r(n,i,\beta_k)+\sum_{(m,j)}\pi^{\beta_k}_{(n,i)(m,j)}h(m,j)=g(\beta_k)\\
	&\sum_{(m,j)}h(m,j)\vartheta_{\beta_k}(m,j)=0.
	\end{array}\right.
	\end{align*}
	{\bf Step 3:} Define the new stationary policy $\beta_{k+1}$ in the following way:
	Set $\beta_{k+1}(n,i):=\beta_k(n,i)$ for all $(n,i)\in \mathbb{N}_0\times\mathbb{N}_0$ for which
	\begin{align}\label{E7.2}
	&	r(n,i,\beta_k(n,i))+\sum_{(m,j)}\pi^{\beta_k}_{(n,i)(m,j)}h_{\beta_k}(m,j)\nonumber\\
	&=\inf_{\tilde{\beta}\in [\gamma, R]} \biggl\{r(n,i,\tilde{\beta})+\sum_{(m,j)}\pi^{\tilde{\beta}}_{(n,i)(m,j)}h_{\beta_k}(m,j)\biggr\};
	\end{align}
	otherwise (i.e., when (\ref{E7.2}) does not hold), choose $\beta_{k+1}(n,i)\in [\gamma,R]$ such that
	\begin{align}\label{E7.3}
	&	r(n,i,\beta_{k+1}(n,i))+\sum_{(m,j)}\pi^{\beta_{k+1}}_{(n,i)(m,j)}h_{\beta_k}(m,j)\nonumber\\
	&=\inf_{\tilde{\beta}\in [\gamma, R]} \biggl\{r(n,i,\tilde{\beta})+\sum_{(m,j)}\pi^{\tilde{\beta}}_{(n,i)(m,j)}h_{\beta_k}(m,j)\biggr\}.
	\end{align}
	{\bf Step 4:} If $\beta_{k+1}$ satisfies (\ref{E7.2}) for all $(n,i)\in \mathbb{N}_0\times\mathbb{N}_0$, then stop because (by Theorem \ref{T10} below) $\beta_{k+1}$ is average cost (AC) (or pathwise average cost optimal (PACO)); otherwise, replace $\beta_k$ with $\beta_{k+1}$ and go back to Step 2.
	\begin{remark}
		Now we discuss how the policy iteration algorithm works.\\
		Let $\beta_0\in \mathscr{U}_{SM}$ be the initial policy in the policy iterartion algorithm (see Step 1), and let $\{\beta_k\}$ be the sequence of stationary polices obtained by the repeated application of the algorithm.\\ If $$\beta_k=\beta_{k+1}~ \text{for some}~k,$$ then it follows from Proposition \ref{P2} that the pair $(g(\beta_k),h_{\beta_k})$ is a solution to the ACOE, and thus, by Theorem \ref{T9}, $\beta_k$ is AC optimal. Hence, to analyze the convergence of the policy iterarion algorithm, we will consider the case
		\begin{equation}\label{EQ8.12}
		\beta_k\neq \beta_{k+1}~\text{for every}~k\geq 0.
		\end{equation}
		Define, for $k\geq 1$ and $(n,i)\in \mathbb{N}_0\times\mathbb{N}_0$,
		\begin{align*}
		\varepsilon(n,i,\beta_k):=&\biggl[r(n,i,\beta_{k-1})+\sum_{(m,j)}\pi^{\beta_{k-1}}_{(n,i)(m,j)}h_{\beta_{k-1}}(m,j)\biggr]\\&-\biggl[r(n,i,\beta_k)+\sum_{(m,j)}\pi^{\beta_k}_{(n,i)(m,j)}h_{\beta_{k-1}}(m,j)\biggr],
		\end{align*}
		which by Proposition \ref{P2} can be expressed as
		\begin{align}\label{EQ8.13}
		\varepsilon(n,i,\beta_k)=g(\beta_{k-1})-\biggl[r(n,i,\beta_k)+\sum_{(m,j)}\pi^{\beta_k}_{(n,i)(m,j)}h_{\beta_{k-1}}(m,j)\biggr].
		\end{align}
		Observe (by Step 3 above) that $\varepsilon(n,i,\beta_k)=0$ if $\beta_k(n,i)=\beta_{k-1}(n,i)$, whereas $\varepsilon(n,i,\beta_k)>0$ if $\beta_k(n,i)\neq \beta_{k-1}(n,i)$.\\ Hence, $\varepsilon(n,i,\beta_k)$ can be interpreted as the ``improvement'' of the nth iteration of the algorithm.
	\end{remark}
	
	\subsection*{Numerical Example:} Figure \ref{figure4} shows the results obtained from the above algorithm using the same parameters as in the previous experiments with the exception of $\alpha$, see p. 18-19. It can be noticed that the optimal production rates for some states such as $(2,1), (3,2)$ and $(4,2)$ are different from that of discounted cost algorithms. Discounted cost criterion assumes more importance to the current observed state, whereas average cost assumes equal importance to each state observed at transitions. At states $(2,1), (3,2)$ and $(4,2)$, the optimal policy advises higher production rate in consideration of future states as compared to the discounted cost criterion. 

\begin{figure}[H]
	\centering
	\includegraphics[width=0.6\textwidth]{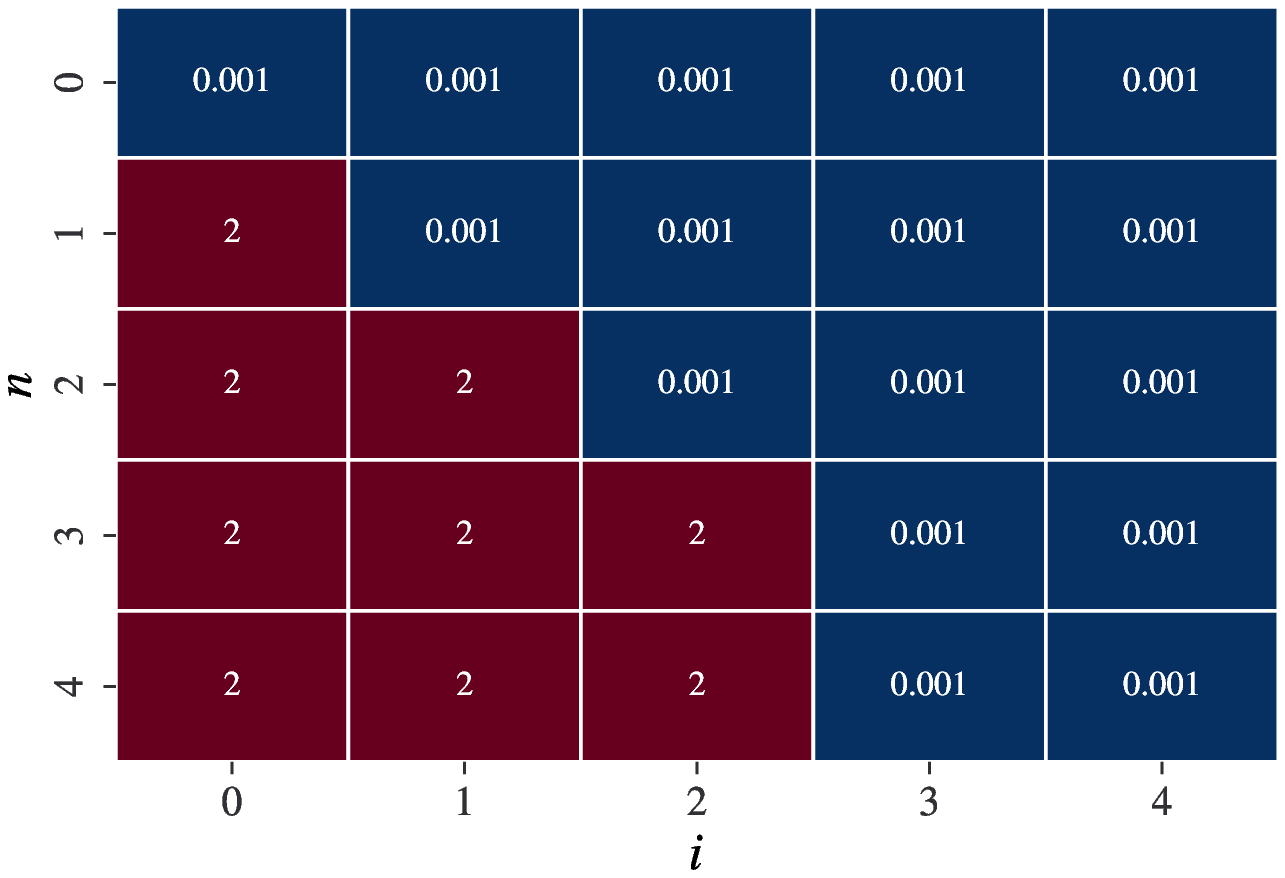}
	\caption{Optimal policy corresponding to average-cost policy iteration algorithm. }
	\label{figure4}
\end{figure}

	Next by [Guo and Hern$\acute{\rm a}$ndez-Lerma (2009), Lemma 7.13], we have the following Lemma.
	\begin{lemma}\label{L11}
		Under Assumptions \ref{A5}, \ref{A2} and \ref{A3}, suppose that (\ref{EQ8.12}) is satisfied. Then the following statements hold
		\begin{itemize}
			\item [(a)] The sequence $\{g(\beta_k)\}$ is strictly decreasing and it has a finite limit.\\
			\item [(b)] For every $(n,i)\in \mathbb{N}_0\times\mathbb{N}_0$, $\varepsilon(n,i,\beta_k)\rightarrow 0$ as $k\rightarrow \infty$.
		\end{itemize}
	\end{lemma}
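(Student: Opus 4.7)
The plan is to prove part (a) directly by combining the Poisson equation (\ref{EQ8.8}) for $\beta_k$ with the definition (\ref{EQ8.13}) of $\varepsilon(\cdot,\cdot,\beta_k)$, producing an identity that exposes the strict decrease of $g(\beta_k)$; part (b) then follows by telescoping this identity and upgrading the resulting integral decay to pointwise decay via compactness and the uniform ergodicity in Assumption \ref{A3}.

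For part (a), I would start from $g(\beta_k)=r(n,i,\beta_k)+\sum_{(m,j)}\pi^{\beta_k}_{(n,i)(m,j)}h_{\beta_k}(m,j)$ and rewrite (\ref{EQ8.13}) as
$r(n,i,\beta_k)+\sum_{(m,j)}\pi^{\beta_k}_{(n,i)(m,j)}h_{\beta_{k-1}}(m,j)=g(\beta_{k-1})-\varepsilon(n,i,\beta_k).$
Subtracting yields the pointwise identity
$g(\beta_k)-g(\beta_{k-1})+\varepsilon(n,i,\beta_k)=\sum_{(m,j)}\pi^{\beta_k}_{(n,i)(m,j)}\bigl[h_{\beta_k}(m,j)-h_{\beta_{k-1}}(m,j)\bigr].$
Integrating against the invariant probability measure $\vartheta_{\beta_k}$ of the $\beta_k$-chain annihilates the right-hand side, because $\sum_{(n,i)}\vartheta_{\beta_k}(n,i)\pi^{\beta_k}_{(n,i)(m,j)}=0$ for every $(m,j)$. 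This gives the key identity $g(\beta_k)-g(\beta_{k-1})=-\vartheta_{\beta_k}\bigl(\varepsilon(\cdot,\cdot,\beta_k)\bigr)$. By Step 3 of the algorithm and hypothesis (\ref{EQ8.12}), $\varepsilon(\cdot,\cdot,\beta_k)\geq 0$ with strict inequality at some state; Condition A makes $\vartheta_{\beta_k}$ fully supported, so the right-hand side is strictly negative. Since $r\geq 0$ gives $g(\beta_k)\geq 0$, the monotone sequence $\{g(\beta_k)\}$ converges to a finite limit.

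For part (b), telescoping the identity from part (a) yields $\sum_{k=1}^{N}\vartheta_{\beta_k}\bigl(\varepsilon(\cdot,\cdot,\beta_k)\bigr)=g(\beta_0)-g(\beta_N)$, whose right-hand side is bounded by $g(\beta_0)-\lim_k g(\beta_k)<\infty$. Consequently $\vartheta_{\beta_k}(\varepsilon(\cdot,\cdot,\beta_k))\to 0$. To promote this to pointwise convergence, I would argue by contradiction: suppose $\varepsilon(n^*,i^*,\beta_{k_j})\geq c>0$ along some subsequence at some state $(n^*,i^*)$. Compactness of $\mathscr{U}_{SM}$ extracts a further subsequence along which both $\beta_{k_j}\to\beta^{\infty}$ and $\beta_{k_j-1}\to\beta'^{\infty}$. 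Assumption \ref{A3} (uniform ergodicity), together with Proposition \ref{P2}, yields continuity of $\beta\mapsto h_\beta$ in the $W$-weighted norm and of $\beta\mapsto \vartheta_\beta(n^*,i^*)$, so $\vartheta_{\beta_{k_j}}(n^*,i^*)\to\vartheta_{\beta^{\infty}}(n^*,i^*)>0$ (using irreducibility of the limit chain). But then $\vartheta_{\beta_{k_j}}(n^*,i^*)\cdot\varepsilon(n^*,i^*,\beta_{k_j})\leq \vartheta_{\beta_{k_j}}(\varepsilon(\cdot,\cdot,\beta_{k_j}))\to 0$ forces $\varepsilon(n^*,i^*,\beta_{k_j})\to 0$, a contradiction.

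\textbf{Main obstacle.} Part (a) is essentially an algebraic identity combined with strict positivity from irreducibility, so it is routine. The substantive difficulty is the upgrade from the integral vanishing $\vartheta_{\beta_k}(\varepsilon_k)\to 0$ to the pointwise statement in part (b). This hinges on two consequences of uniform ergodicity: a $W$-weighted bound on the biases $h_{\beta_k}$ that keeps $\{\varepsilon(\cdot,\cdot,\beta_k)\}$ uniformly controlled, and continuity of $\beta\mapsto\vartheta_\beta(n,i)$ along the iterates, which prevents the invariant weights $\vartheta_{\beta_k}(n^*,i^*)$ from collapsing and thereby transfers the decay from the integral to the individual values.
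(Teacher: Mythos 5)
The paper does not actually prove this lemma: it is quoted verbatim from Guo and Hern\'andez-Lerma (2009), Lemma 7.13, with no argument given. Your reconstruction follows essentially that standard route, and part (a) is correct as written: the identity $g(\beta_{k-1})-g(\beta_k)=\vartheta_{\beta_k}\bigl(\varepsilon(\cdot,\cdot,\beta_k)\bigr)$ obtained by pairing the Poisson equation (\ref{EQ8.8}) with (\ref{EQ8.13}) and integrating against $\vartheta_{\beta_k}$ is the heart of the matter; combined with $\varepsilon\geq 0$, the strict positivity of $\varepsilon$ at any state where $\beta_k\neq\beta_{k-1}$ (hypothesis (\ref{EQ8.12})), full support of $\vartheta_{\beta_k}$ from Condition A, and $g(\beta_k)\geq 0$ (since $r\geq 0$), it gives strict decrease and a finite limit. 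You should, however, say a word about why the interchange of sums that annihilates the right-hand side is legitimate: it needs $\sum_{(m,j)}|\pi^{\beta_k}_{(n,i)(m,j)}|\,W(m,j)\leq C\,W(n,i)+C'$, which follows from Assumption \ref{A5}(a) together with the bounded exit rates (\ref{E5.16}), plus the uniform bound $\vartheta_{\beta}(W)\leq b_1/c_1$ recorded before (\ref{EQN8.1}) and $h_{\beta_k},h_{\beta_{k-1}}\in B_W$.

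In part (b) the load-bearing step is asserted rather than proved: you claim that Assumption \ref{A3} ``together with Proposition \ref{P2}'' yields continuity of $\beta\mapsto\vartheta_\beta(n^*,i^*)$, but neither states this, and it is precisely the nontrivial technical point (the continuity of $\beta\mapsto h_\beta$ you also invoke is not needed for your contradiction argument). The claim is true here and can be closed as follows: by Assumption \ref{A5}(b) and the linear growth of the cost (\ref{cost}), $W$ is necessarily coercive, so the uniform bound $\vartheta_{\beta}(W)\leq b_1/c_1$ makes $\{\vartheta_{\beta_{k_j}}\}$ tight; any pointwise limit point $\nu$ is then a probability measure, and since for each fixed target state only finitely many states have nonzero rate into it and the rates (\ref{eq 3.1}) are continuous in the action, one can pass to the limit in the invariance equations $\sum_{(n,i)}\vartheta_{\beta_{k_j}}(n,i)\pi^{\beta_{k_j}}_{(n,i)(m,j)}=0$; uniqueness of the invariant probability measure of the limit policy identifies $\nu=\vartheta_{\beta^{\infty}}$, and Condition A gives $\vartheta_{\beta^{\infty}}(n^*,i^*)>0$, so $\liminf_j\vartheta_{\beta_{k_j}}(n^*,i^*)>0$ and your inequality $\vartheta_{\beta_{k_j}}(n^*,i^*)\,\varepsilon(n^*,i^*,\beta_{k_j})\leq\vartheta_{\beta_{k_j}}\bigl(\varepsilon(\cdot,\cdot,\beta_{k_j})\bigr)\to 0$ delivers the contradiction. (Equivalently, one may argue that $\inf_{\beta\in\mathscr{U}_{SM}}\vartheta_\beta(n^*,i^*)>0$ by compactness of $\mathscr{U}_{SM}$ and this continuity.) With that step supplied, your proof is a correct and self-contained replacement for the citation the paper relies on.
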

	To get the optimal policy from the policy iteration algorithm 5.1, we prove the following Theorem.
	\begin{thm}\label{T10}
		Suppose that Assumptions \ref{A5}, \ref{A2} and \ref{A3} hold, and let $\beta_1\in \mathscr{U}_{SM}$ be an arbitrary initial policy for the policy iteration algorithm 5.1. Let $\{\beta_k\}\subset \mathscr{U}_{SM}$ be the sequence of polices obtained by the policy iteration algorithm 5.1. Then one of the following results hold.
		\begin{itemize}
			\item [(a)] Either 
			\begin{itemize}
				\item [(i)] the algorithm converges in a finite number of iterations to an AC optimal policy;\\
				Or\\
				\item [(ii)] as $k\rightarrow\infty$, the sequnce $\{g(\beta_k)\}$ converges to the optimal AC function value $g^*$, and any limit point of $\{\beta_k\}$ is an AC optimal stationary policy.
			\end{itemize}
			\item [(b)] There exists a subsequence $\{\beta_l\}\subset \{\beta_k\}$ for which 
			\begin{align*}
			&g(\beta_l)\rightarrow g~\text{[Lemma~\ref{L11}]},~\beta_l\rightarrow \beta,~\text{and}\nonumber\\
			&h_{\beta_l}\rightarrow h~\text{[pointwise]}.
			\end{align*}
			In addition, the limiting triplet $(g,\beta,h)\in \mathbb{R}\times\mathscr{U}_{SM}\times B_W(\mathbb{N}_0 \times \mathbb{N}_0)$ satisfies 
			\begin{align}\label{E6.6}
			g&=r(n,i,\beta)+\sum_{(m,j)}\pi^{\beta}_{(n,i)(m,j)}h(m,j)\nonumber\\
			&=\inf_{\tilde{\beta}\in [\gamma, R]}\biggl\{r(n,i,\tilde{\beta})+\sum_{(m,j)}\pi^{\tilde{\beta}}_{(n,i)(m,j)}h(m,j)\biggr\}.
			\end{align}
		\end{itemize}
	\end{thm}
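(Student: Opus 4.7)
The plan is to argue by cases on whether the policy iteration algorithm halts in finitely many steps, and in the non--terminating case to extract a convergent subsequence via compactness of $\mathscr{U}_{SM}$ and the uniform ergodicity supplied by Assumption \ref{A3}. First I would dispose of the finite--termination alternative (a)(i): if $\beta_k = \beta_{k+1}$ for some $k$, then by the characterization of the improvement step in (\ref{E7.2})--(\ref{E7.3}) we have
\begin{equation*}
g(\beta_k) = r(n,i,\beta_k) + \sum_{(m,j)}\pi^{\beta_k}_{(n,i)(m,j)}h_{\beta_k}(m,j) = \inf_{\tilde\beta\in[\gamma,R]}\Bigl\{r(n,i,\tilde\beta) + \sum_{(m,j)}\pi^{\tilde\beta}_{(n,i)(m,j)}h_{\beta_k}(m,j)\Bigr\}
\end{equation*}
for every $(n,i)\in \mathbb{N}_0\times\mathbb{N}_0$. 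This is exactly the ACOE (\ref{EQ8.1}) with $(g^*,\tilde u)=(g(\beta_k),h_{\beta_k})$, and Theorem \ref{T9}(b) then identifies $\beta_k$ as AC--optimal.

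Assume henceforth that (\ref{EQ8.12}) holds. I would next build the candidate limit via compactness. Since $\mathscr{U}_{SM}=[\gamma,R]^{\mathbb{N}_0\times\mathbb{N}_0}$ is compact (Tychonoff), pick a subsequence $\{\beta_l\}\subset\{\beta_k\}$ with $\beta_l(n,i)\to\beta(n,i)$ pointwise. By Lemma \ref{L11}(a) the decreasing sequence $g(\beta_k)$ has a finite limit $g$; along the subsequence $g(\beta_l)\to g$. To produce the pointwise limit of $h_{\beta_l}$, I would first invoke Assumptions \ref{A5} and \ref{A2} together with the uniform ergodicity from Assumption \ref{A3} and the integral representation (\ref{EQ6.4}) of the bias to deduce a $W$--weighted bound of the form $|h_{\beta_l}(n,i)|\le C\,W(n,i)$ uniform in $l$, so that $\{h_{\beta_l}\}$ is a uniformly bounded family in $B_W(\mathbb{N}_0\times\mathbb{N}_0)$. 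A diagonal extraction across the countable state space then gives a further subsequence (still denoted $\{\beta_l\}$) such that $h_{\beta_l}(n,i)\to h(n,i)$ pointwise with $h\in B_W(\mathbb{N}_0\times\mathbb{N}_0)$.

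The central step is to pass to the limit in the Poisson equation (\ref{EQ8.8}) for $\beta_l$ and in the improvement identity (\ref{E7.3}). Using continuity of $r(n,i,\cdot)$ and of $\pi^{\cdot}_{(n,i)(m,j)}$ in the action (stated on p.~13) together with the dominating function $W$ from Assumption \ref{A5} to justify the generalized dominated convergence theorem [Guo and Hern\'andez-Lerma (2009), Proposition A.4], the summations $\sum_{(m,j)}\pi^{\beta_l}_{(n,i)(m,j)}h_{\beta_l}(m,j)$ converge to $\sum_{(m,j)}\pi^{\beta}_{(n,i)(m,j)}h(m,j)$. Combining this with $g(\beta_l)\to g$ and Lemma \ref{L11}(b), which gives $\varepsilon(n,i,\beta_l)\to 0$ and hence collapses the two sides of (\ref{EQ8.13}) onto the same infimum, I would obtain (\ref{E6.6}), namely
\begin{equation*}
g = r(n,i,\beta) + \sum_{(m,j)}\pi^{\beta}_{(n,i)(m,j)}h(m,j) = \inf_{\tilde\beta\in[\gamma,R]}\Bigl\{r(n,i,\tilde\beta) + \sum_{(m,j)}\pi^{\tilde\beta}_{(n,i)(m,j)}h(m,j)\Bigr\}
\end{equation*}
for every $(n,i)\in\mathbb{N}_0\times\mathbb{N}_0$. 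This proves part (b), and it simultaneously exhibits $(g,h)$ as a solution to the ACOE (\ref{EQ8.1}). Then Theorem \ref{T9}(a) forces $g=g^*=J^*$, while Theorem \ref{T9}(b) shows that any such limit $\beta$ attains the minimum in (\ref{EQ8.2}) and is therefore AC--optimal, establishing (a)(ii).

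I expect the main obstacle to be the uniform $W$--weighted bound on $\{h_{\beta_l}\}$ needed to justify the dominated convergence passage: this requires exploiting Assumption \ref{A3} uniformly over $\beta\in\mathscr{U}_{SM}$ in the representation (\ref{EQ6.4}), because without such a uniform bound one cannot interchange limit and infinite sum. A secondary technical point is to ensure that the infimum in the right-hand side of (\ref{E7.3}) passes to the limit; here continuity of the integrand in $\tilde\beta$ for each fixed $(n,i)$ (noted in Remark 3.1) together with compactness of $[\gamma,R]$ and pointwise convergence of $h_{\beta_l}$ suffices by the Berge-type argument implicit in [Guo and Hern\'andez-Lerma (2009), Proposition A.4].
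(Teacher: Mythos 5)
Your proposal is correct and follows essentially the same route as the paper's own proof: extract a convergent subsequence using compactness of $\mathscr{U}_{SM}$ and Lemma \ref{L11}, pass to the limit in the Poisson/improvement identity \eqref{EQ8.13} (with $\varepsilon(n,i,\beta_l)\to 0$) to obtain \eqref{E6.6}, and conclude AC-optimality via Theorem \ref{T9}. The only differences are matters of detail: you make explicit the uniform $W$-weighted bound on the biases $h_{\beta_l}$ (which the paper leaves implicit in asserting pointwise convergence) and you handle the finite-termination case inside the proof, whereas the paper disposes of it in the preceding remark and proves only part (b).
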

	\begin{proof}
		Note that it is enough to prove part (b).\\
		Let $\{\beta_k\}$ satisfy (\ref{EQ8.12}).
		In view of Lemma \ref{L11}, since $\mathscr{U}_{SM}$ is compact and $h_{\beta_l}\in B_W(\mathbb{N}_0 \times \mathbb{N}_0)$, there exists a subsequence $\{\beta_l\}$ of $\beta_k$ such that $h_{\beta_l}$ converges pointwise to some $h\in B_W(\mathbb{N}_0 \times \mathbb{N}_0)$. So, we have
		\begin{align}\label{EQ8.14}
		&g(\beta_l)\rightarrow g,~\beta_l\rightarrow \beta,~\text{and}\nonumber\\
		&h_{\beta_l}\rightarrow h.
		\end{align}
		Now, by Proposition \ref{P2} and the definition of the improvement term $\varepsilon(n,i,\beta_{l+1})$  in (\ref{EQ8.13}), we have
		\begin{align}\label{EQ8.15}
		g(\beta_{l})&=\biggl[r(n,i,\beta_l)+\sum_{(m,j)}\pi^{\beta_l}_{(n,i)(m,j)}h_{\beta_{l}}(m,j)\biggl]\nonumber\\
		&=\min_{\tilde{\beta}\in [\gamma, R]}\biggl\{r(n,i,\tilde{\beta})+\sum_{(m,j)}\pi^{\tilde{\beta}}_{(n,i)(m,j)}h{(m,j)}\biggr\}+\varepsilon(n,i,\beta_{l+1}).
		\end{align}
		Taking $l\rightarrow\infty$, we have
		\begin{align}\label{EQ8.16}
		g&=\biggl[r(n,i,\beta)+\sum_{(m,j)}\pi^{\beta}_{(n,i)(m,j)}h_{\beta}(m,j)\biggr]\nonumber\\
		&=\inf_{\tilde{\beta}\in [\gamma, R]}\biggl\{r(n,i,\tilde{\beta})+\sum_{(m,j)}\pi^{\tilde{\beta}}_{(n,i)(m,j)}h{(m,j)}\biggr\}~\forall (n,i)\in \mathbb{N}_0 \times \mathbb{N}_0.
		\end{align}
		Hence $\beta$ is AC optimal and $g$ is optimal AC function.
	\end{proof}
	\section{Average optimality for pathwise costs:} 
	In section 5, we have studied the optimality problem under the expected average cost $J(n,i,\beta)$. However, the sample-path reward $r(Y(t), \beta)$ corresponding to an average-reward optimal policy that minimizes an expected average cost may have fluctuations from its expected value. To take these fluctuations into account, we next consider the pathwise average-cost (PAC) criterion.\\
	In the next theorem, we find the existence of solution of the pathwise average cost optimality equation (PACOE).\\
	Here we give an outline of proof of the following optimality Theorem; for details, see [Guo and Hern$\acute{\rm a}$ndez-Lerma (2009), Theorem 8.5].
	\begin{thm}\label{T11}
		Under Assumptions \ref{A5}, \ref{A2}, \ref{A3} and \ref{A4}, the following statements hold.
		\begin{itemize}
			\item [(a)] There exist a unique $g^*$, a function $u^*\in B_W(\mathbb{N}_0\times\mathbb{N}_0)$, and a stationary policy $\beta^*\in \mathscr{U}_{SM}$ satisfying the average-cost optimality equation (ACOE)
			\begin{align}\label{EQ9.4}
			g^*&=\inf_{\tilde{\beta}\in [\gamma,R]}\biggl\{r(n,i,\tilde{\beta})+\sum_{(m,j)}u(m,j)\pi_{(n,i)(m,j)}^{\tilde{\beta}}\biggr\}\nonumber\\
			&=r(n,i,\beta^*(n,i))+\sum_{(m,j)}u(m,j)\pi_{(n,i)(m,j)}^{\beta^*}~\forall (n,i)\in \mathbb{N}_0\times\mathbb{N}_0.
			\end{align}
			\item [(b)] The policy $\beta^*$ in (a) is PAC-optimal, and $P^{\beta^*}_{(n,i)}(J_c(n,i,\beta^*)=g^*)=1$ for all $(n,i)\in \mathbb{N}_0\times\mathbb{N}_0$, with $g^*$ as in (a).\\
			\item [(c)] A policy in $\mathscr{U}_{SM}$ is PAC-optimal iff it realizes the minimum in (\ref{EQ9.4}).\\
			\item [(d)] For given $\varepsilon\geq 0$, $\beta\in \mathscr{U}_{SM}$, and $g^*$ as in (a) above, if there is a function $u^{'}\in B_W(\mathbb{N}_0\times\mathbb{N}_0)$ such that
			\begin{align}\label{EQ9.5}
			g^*\geq r(n,i,\beta(n,i))+\sum_{(m,j)}u^{'}(m,j)\pi_{(n,i)(m,j)}^{\beta}-\varepsilon~\forall (n,i)\in \mathbb{N}_0\times\mathbb{N}_0,
			\end{align}
			then $\beta$ is $\varepsilon$-PAC-optimal.
		\end{itemize}
	\end{thm}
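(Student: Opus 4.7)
My plan is to derive the four parts from results already available in the preceding sections, with the pathwise statement being the only genuinely new content. For part (a), Theorem~\ref{T9} (available under Assumptions~\ref{A5}, \ref{A2}, \ref{A3}) already supplies a pair $(g^*,u^*)\in\mathbb{R}\times B_W(\mathbb{N}_0\times\mathbb{N}_0)$ solving the ACOE~(\ref{EQ9.4}); continuity of $r(n,i,\cdot)$ and $\pi^{\cdot}_{(n,i)(m,j)}$ on the compact interval $[\gamma,R]$, together with the $W$-bound from Assumption~\ref{A5}(b), furnishes a measurable selector $\beta^*\in\mathscr{U}_{SM}$ attaining the infimum in~(\ref{EQ9.4}). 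Uniqueness up to additive constants is inherited from Theorem~\ref{T9}.

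The substance of the proof lies in part (b). Evaluating the ACOE at $\beta^*$ gives $g^* = r(Y(s),\beta^*(Y(s))) + \Pi^{\beta^*}_{Y(s)}u^*(Y(s))$. By Dynkin's formula,
\begin{equation*}
M(T) := u^*(Y(T)) - u^*(Y(0)) - \int_0^T \Pi^{\beta^*}_{Y(s)} u^*(Y(s))\,ds
\end{equation*}
is a $P^{\beta^*}_{(n,i)}$-local martingale, so rearrangement yields the pathwise identity
\begin{equation*}
\frac{1}{T}\int_0^T r(Y(s),\beta^*(Y(s)))\,ds \;=\; g^* + \frac{u^*(Y(0)) - u^*(Y(T))}{T} + \frac{M(T)}{T}.
\end{equation*}
It remains to show the two correction terms vanish $P^{\beta^*}_{(n,i)}$-almost surely. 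Assumption~\ref{A5}(a) controls $E^{\beta^*}_{(n,i)}W(Y(T))$ uniformly in $T$; combined with Assumption~\ref{A4}(a) (the drift for $W^*_1 \geq W^2$), a Borel--Cantelli argument gives $u^*(Y(T))/T\to 0$ a.s. For $M(T)/T$, the predictable quadratic variation involves terms of the form $\sum_{(m,j)}(u^*(m,j)-u^*(Y(s)))^2\pi^{\beta^*}_{Y(s)(m,j)}$, which are dominated (up to a constant multiple of $\|u^*\|_W^2$) by $[\pi^*_{Y(s)}W(Y(s))]^2$; Assumption~\ref{A4}(b) then gives $E^{\beta^*}_{(n,i)}\langle M\rangle_T = O(T)$, so by the strong law of large numbers for locally square-integrable martingales, $M(T)/T\to 0$ a.s.

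Parts (c) and (d) are routine adaptations of the same identity. For (c), necessity follows by contradiction: if a PAC-optimal $\beta$ failed to attain the minimum in~(\ref{EQ9.4}) at some state, then by Condition~A the invariant measure $\vartheta_\beta$ gives this state positive mass, and an argument analogous to~(\ref{EQ8.6})--(\ref{EQ8.7}) in the proof of Theorem~\ref{T9} combined with the pathwise identity produces $J_c(n,i,\beta) > g^*$ a.s., contradicting PAC-optimality; sufficiency is part (b). For (d), replacing $u^*$ by $u'$ and inserting the hypothesis~(\ref{EQ9.5}) into the derivation yields
\begin{equation*}
\frac{1}{T}\int_0^T r(Y(s),\beta(Y(s)))\,ds \;\leq\; g^* + \varepsilon + \frac{u'(Y(0))-u'(Y(T))}{T} + \frac{\widetilde{M}(T)}{T},
\end{equation*}
where $\widetilde{M}$ is the analogous martingale for $u'$; the same vanishing estimates give $J_c(n,i,\beta)\leq g^*+\varepsilon$ a.s., and combined with the lower bound $J_c(n,i,\beta)\geq g^*$ a.s.\ (from applying part~(b)-style reasoning to arbitrary competitors) this establishes $\varepsilon$-PAC-optimality.

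The principal obstacle is the almost-sure vanishing of $M(T)/T$ in part (b); this is precisely the point at which Assumption~\ref{A4}'s second-moment drift on $W^*_1$ and $W^*_2$ becomes indispensable, going beyond the first-moment drift sufficient for \emph{expected} average-cost optimality in Theorem~\ref{T9}. Once the martingale strong law is secured, the selection, uniqueness, and converse statements follow by standard adaptations, which is why the author defers the technical details to Guo and Hern\'andez-Lerma (2009, Theorem~8.5).
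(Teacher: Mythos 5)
Your proposal is correct and follows essentially the same route as the paper: part (a) inherited from Theorem \ref{T9}, the pathwise identity built from the ACOE via the process $M(t,\beta)$ with $\Delta(n,i,\beta)\leq 0$ and $\Delta(n,i,\beta^*)=0$, Assumption \ref{A4} powering the almost-sure vanishing of the correction terms, and (c), (d) obtained by the same perturbation of that identity. The only difference is one of detail: where the paper simply cites Guo and Hern\'andez-Lerma (2009, Theorem 8.5, eqs.\ (8.31)--(8.32)) for the step $M(T)/T\to 0$ and $u^*(Y(T))/T\to 0$ a.s., you sketch the underlying quadratic-variation/Borel--Cantelli argument, which is consistent with (indeed slightly fuller than) the paper's outline.
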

	\begin{proof}
		\begin{itemize}
			\item [(a)] Note that, part (a) has been obtained in Theorem \ref{T9}, see [Guo and Hern$\acute{\rm a}$ndez-Lerma (2009), Remark 8.4]. The proof is based on the fact that if $\beta_k$, $g(\beta_k)$, and $h_{\beta_k}$ are as in (\ref{E7.2})-\eqref{EQ8.13}, then there exist a subsequence $\{\beta_{k_l}\}$ of $\beta_k$, $\beta^*\in \mathscr{U}_{SM}$, $u^*\in B_W(\mathbb{N}_0\times\mathbb{N}_0)$, and a constant $g^*$ such that for each $(n,i)\in \mathbb{N}_0\times\mathbb{N}_0$,
			\begin{align}\label{EQ9.6}
			\lim_{l\rightarrow\infty}h_{\beta_{k_l}}=:u^*,~	\lim_{l\rightarrow\infty} \beta_{k_l}=\beta^*,~\text{and}~	\lim_{l\rightarrow\infty}g(\beta_{k_l})=g^*.
			\end{align}
			The triplet $(g^*,u^*,\beta^*)$ satisfies (\ref{EQ9.4}).
			\item [(b)] To prove (b), for all $(n,i)\in \mathbb{N}_0\times\mathbb{N}_0$, $\beta\in \mathscr{U}_{SM}$ let
			\begin{align}
			&\Delta(n,i,\beta(n,i)):=r(n,i,\beta(n,i))+\sum_{(m,j)}u^*(m,j)\pi_{(n,i)(m,j)}^{\beta}-g^*,\label{EQ9.7}\\
			&\bar{h}(n,i,\beta):=\sum_{(m,j)}u^*(m,j)\pi^{\beta}_{(n,i)(m,j)}.\label{EQN9.7}
			\end{align}
			We define the (continuous-time) stochastic process
			\begin{equation}\label{EQ9.8}
			M(t,\beta):=\int_{0}^{t}\bar{h}(Y(y),\beta)dy-u^*(Y(t))~\text{ for }~t\geq 0.
			\end{equation}
			By similar arguments as in [Guo and Hern$\acute{\rm a}$ndez-Lerma (2009), Theorem 8.5], we have
			\begin{equation}\label{EQ9.9}
			M(t,\beta)=-\int_{0}^{t}r(Y(y),\beta)dy+\int_{0}^{t}\Delta(Y(y),\beta)dy-u^*(Y(t))+t g^*.
			\end{equation}
			Then from \eqref{E2.2}, \eqref{EQ9.4} and (\ref{EQ9.7}) we get $\Delta(n,i,\beta)\leq 0$ and $\Delta(n,i,\beta
			^*)=0$ for all $(n,i)\in \mathbb{N}_0\times\mathbb{N}_0$.
			Thus by [Guo and Hern$\acute{\rm a}$ndez-Lerma (2009), Theorem 8.5, equations (8.31), (8.32)] and \eqref{EQ9.9}, we get
			\begin{align}\label{EQ10}
			&P^{\beta}_{(n,i)}(J_c(n,i,\beta)\geq g^*)=1~\text{and}\nonumber\\
			&P^{\beta^*}_{(n,i)}(J_c(n,i,\beta^*)=g^*)=1.
			\end{align}
			Since $\beta\in \mathscr{U}_{SM}$ and $(n,i)\in\mathbb{N}_0\times\mathbb{N}_0$ are arbitrary, we get part (b).
			\item [(c)] See Theorem \ref{T9} (b) or [Guo and Hern$\acute{\rm a}$ndez-Lerma (2009), Theorem 8.5 (c)].
			\item [(d)] Let $\Delta_{u^{'}}(n,i,\beta(n,i)):=r(n,i,\beta(n,i))+\sum_{(m,j)}u^{'}(m,j)\pi^{\beta}_{(n,i)(m,j)}-g^*$. Then by (\ref{EQ9.5}), we have $\Delta_{u^{'}}(n,i,\beta(n,i))\leq -\varepsilon$ for all $(n,i)\in \mathbb{N}_0\times\mathbb{N}_0$. So, (as proof of \eqref{EQ10}) we have
			$$P^{\beta}_{(n,i)}(J_c(n,i,\beta)\leq g^*+\varepsilon)=1,$$ which together with (b), gives (d).
		\end{itemize}
	\end{proof}
	\subsection{{\bf The Pathwise Average-Cost Policy Iteration Algorithm:}}
	In view of Theorem \ref{T11} and Proposition \ref{P4} given below, one can use the policy iteration algorithm for computing the optimal production rate $\beta^{*}$.
 To compute this optimal production rate $\beta^*$, we describe the following policy iteration algorithm.\\ \noindent	\textbf{\textit{The Policy Iteration Algorithm} 6.1:}\label{Al5}
	See the Policy Iteration Algorithm 5.1 for computing the optimal production rate $\beta^{*}$.\\
	Next in view of Theorem \ref{T11} (c), we have the following Proposition.
	\begin{proposition}\label{P4}
		Suppose that Assumptions \ref{A5}, \ref{A2}, \ref{A3} and \ref{A4} hold. Then any limit point $\beta^*$ of the sequence $\{\beta_k\}$ obtained by the Policy Iteration Algorithm 5.1 is PAC-optimal.
	\end{proposition}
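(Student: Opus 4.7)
The plan is to reduce Proposition \ref{P4} to a direct combination of the convergence properties from Theorem \ref{T10} and the PAC characterisation from Theorem \ref{T11} (c). Let $\beta^{*}\in \mathscr{U}_{SM}$ be an arbitrary limit point of the sequence $\{\beta_k\}$ generated by the algorithm. First I would extract a subsequence $\{\beta_{k_l}\}$ with $\beta_{k_l}\to \beta^{*}$ in $\mathscr{U}_{SM}$ (possible by compactness). If the algorithm terminates in finitely many steps, then $\beta^{*}=\beta_{k_l}$ eventually and satisfies the ACOE \eqref{EQ9.4} directly by Step 4 of the algorithm, so I would focus on the nontrivial case \eqref{EQ8.12}.

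Next I would invoke the arguments used in the proof of Theorem \ref{T10} (b). Since $h_{\beta_{k_l}}\in B_W(\mathbb{N}_0\times\mathbb{N}_0)$ is uniformly $W$-bounded (by Lemma \ref{L11} together with Assumption \ref{A3}), I can pass to a further subsequence (still denoted $\{\beta_{k_l}\}$) along which $h_{\beta_{k_l}}\to h$ pointwise for some $h\in B_W(\mathbb{N}_0\times\mathbb{N}_0)$ and $g(\beta_{k_l})\to g$ for some constant $g$. Exactly as in \eqref{EQ8.15}--\eqref{EQ8.16}, together with the vanishing-improvement statement $\varepsilon(n,i,\beta_{k_l+1})\to 0$ from Lemma \ref{L11} (b) and continuity of the one-step integrand in $\tilde{\beta}$, I would pass to the limit to conclude that $(g,\beta^{*},h)$ satisfies
\begin{align*}
g \;=\; r(n,i,\beta^{*}) + \sum_{(m,j)} \pi^{\beta^{*}}_{(n,i)(m,j)} h(m,j)
\;=\; \inf_{\tilde{\beta}\in [\gamma,R]}\Bigl\{r(n,i,\tilde{\beta})+\sum_{(m,j)}\pi^{\tilde{\beta}}_{(n,i)(m,j)} h(m,j)\Bigr\}
\end{align*}
for all $(n,i)\in \mathbb{N}_0\times\mathbb{N}_0$. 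By the uniqueness clause in Theorem \ref{T9} (a), $g=g^{*}$, and this is precisely the ACOE \eqref{EQ9.4} of Theorem \ref{T11} with $u^{*}=h$.

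Finally, since $\beta^{*}$ attains the infimum in \eqref{EQ9.4}, I would apply Theorem \ref{T11} (c), which states that a stationary policy is PAC-optimal if and only if it realises this minimum; hence $\beta^{*}$ is PAC-optimal. The main obstacle I anticipate is the bookkeeping of subsequences: one needs both $\beta_{k_l}\to \beta^{*}$ and simultaneous pointwise convergence of $h_{\beta_{k_l}}$ to an element of $B_W(\mathbb{N}_0\times\mathbb{N}_0)$, and one must verify that Assumption \ref{A4} (unused in Theorem \ref{T10}) is indeed the ingredient that promotes AC-optimality to PAC-optimality via Theorem \ref{T11}. Everything else is a direct quotation of previously established results.
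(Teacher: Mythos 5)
Your argument is exactly the route the paper takes: it offers no separate proof of Proposition \ref{P4}, simply noting that it follows ``in view of Theorem \ref{T11} (c)'' once the limit point is known (via the Theorem \ref{T10} machinery, Lemma \ref{L11}, and the uniqueness in Theorem \ref{T9}) to attain the minimum in the ACOE \eqref{EQ9.4}. Your write-up just makes the subsequence bookkeeping explicit, so it is correct and essentially the same approach.
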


	\section{Conclusions} 
	In this article, we have examined a production inventory dynamic control system for discounted, average, and pathwise average cost criterion for risk-neutral cost (i.e., expectation of the total cost) criterion. Here, the demands arrive at the production workshop according to a Poisson process and the processing time of the customer's demands is exponentially distributed. Each production is one unit and the production is kept running until the inventory level becomes sufficiently large, and the production is on a make-to-order basis. We assume that the production time of an item follows an exponential distribution and that the amount of time for the item produced to reach the retail shop is negligible. In addition, we have assumed that no new customer join the queue when there is a void inventory. This yields an explicit product-form solution for the steady-state probability vector of the system. We further discuss the policy and value iteration algorithms for each cost criterion. Using these algorithms, we obtain the optimal production rate that minimizes the discounted/average/pathwise average total cost per production using a Markov decision process approach. Numerical examples are used to verify the discussed algorithms. 
		
	Our proposed model with service time, production time, or lead time following a general distribution can be direct extensions of this work. Another potential research would be to conduct the same analysis under the risk-sensitive utility (i.e., expectation of the exponential of the total cost) cost criterion, which provides more comprehensive protection from risk than the risk-neutral case.

	~\\ \textbf{Acknowledgment}
	
 The research work of Chandan Pal is partially supported by SERB, India, grant MTR/2021/000307 and the research work of Manikandan, R., is supported by DST-RSF  research project no. 64800 (DST) and research project no. 22-49-02023 (RSF).


\begin{thebibliography}{99}

	
	\bibitem{AEA} Arivarignan, G., Elango, C. \& Arumugam, N. (2002). {A continuous review perishable inventory control system at service facilities}. \textit{Advances in stochastic modelling}, 29-40.
	
	
	\bibitem{BKS}Berman, O., Kaplan, E. H. \& Shimshak, D. G. (1993). {Deterministic approximations for inventory management at service facilities}. \textit{IIE Transactions}, 25(5), 98-104.
	
	\bibitem{BK} Berman, O. \& Kim, E. (1999). {Stochastic models for inventory managements at service facilities}, \textit{Communications in Statistics}. Stochastic Models, 15(4), 695-718.
	
	\bibitem{BK1}Berman, O. \& Kim, E. (2004). {Dynamic inventory strategies for profit maximization in a service facility with stochastic service, demand and lead time}. \textit{Mathematical Methods of Operational Research}, 60, 497–521.
	
		
	\bibitem{CA} Chakravarthy, S. \& Alfa, A. S. (1986). \textit{Matrix-Analytic Method in Stochastic Models}. {CRC Press, Taylor \& Francis Group}, Boca Raton. 
	
	\bibitem{SRC1}  Chakravarthy, S. R. (2022a). \textit{Introduction to Matrix Analytic Methods in Queues 1: Analytical and Simulation Approach - Basics}. {Wiley-ISTE}. 
	
	\bibitem{SRC2}  Chakravarthy, S. R. (2022b). \textit{Introduction to Matrix-Analytic Methods in Queues 2: Analytical and Simulation Approach – Queues and Simulation}. {Wiley-ISTE}. 
	
		
	\bibitem{FZ1} Federgruen, A. \& Zipkin, P. (1986). {An Inventory Model with Limited Production Capacity and Uncertain Demands II. The Discounted-Cost Criterion}. \textit{Mathematics of Opererations Research}, 11(2), 208-215.
	
	\bibitem{FZ2} Federgruen, A. \& Zipkin, P. (1986). {An Inventory Model with Limited Production Capacity and Uncertain Demands I. The Average-Cost Criterion}, \textit{Mathematics of Opererations Research}, 11(2), 193-207.
	
	
	\bibitem{FZ3} Federgruen, A. \& Zhang, Y. (1992). {An Efficient Algorithm for Computing an Optimal $(r, Q)$ Policy in Continuous Review Stochastic Inventory Systems}. \textit{Operations Research}, 40(4), 633-825.
	
	\bibitem{GhoshSaha} Ghosh, M.\ K.\ \& Saha, S. (2014). {Risk-sensitive control of continuous time Markov chain}. \textit{Stochastic}, 86(4), 655-675.
	
	\bibitem{GP1} Golui, S. \& Pal, C. (2022). Risk-sensitive discounted cost criterion for continuous-time Markov decision processes on a general state space. \textit{Mathematical Methods of Operations Research}, 95, 219-247. https://doi.org/10.1007/s00186-022-00779-9.
	
	\bibitem{GuoLermabook} Guo, X.\ P.\ \& Hern$\acute{\rm a}$ndez-Lerma, O. (2009). \textit{Continuous time Markov decision processes}. \textit{Stochastic modelling and applied probability},  Springer, Berlin, 62.
	
		
	\bibitem{HBJ} He, Q. -M., Buzacott,  E.M. \& Jewkes, J. (2002). {Optimal and near-optimal inventory control policies for a make-to-order inventory–production system}. \textit{European Journal of Operational Research}, 141, 113-132.
	
	\bibitem{HJ} He, Q. -M. \& Jewkes, E. M. (2001). {Performance measures of a make-to-order
		inventory-production system}. \textit{IIE Transactions}, 32, 400-419.
	
	\bibitem{HSZ} Helmes, K. L., Stockbridge, R. H. \& Zhu, C. (2015). {An Inventory Model with Limited Production Capacity and Uncertain Demands II. The Discounted-Cost Criterion}, \textit{SIAM Journal on Control and Optimization}, 53, 2100-2140.
	
	
	\bibitem{HL} Hern$\acute{\rm a}$ndez-Lerma, O. \& Lasserre, J. B. (1996). {Discrete-Time Markov Control Processes}. \textit{Stochastic Modelling and Applied Probability}, (1996). 
	
	
	\bibitem{KDNV}Krishnamoorthy, A., Deepak, T. G., Narayanan, V. C. \& Vineetha, K. (2006a). {Control policies for inventory with service time}. \textit{Stochastic Analysis and Applications}, 24(4), 889–899.
	

    \bibitem{KDV} Krishnamoorthy, A., Shajin, D. \& V.C. Narayanan. (2021). Inventory with positive service time: a survey, \textit{In V. Anisimov and N. Limnios, editors, Queueing Theory 2, pages 201–237, London, 2021. Wiley.} https://doi.org/10.1002/9781119755234.ch6
	
	
	\bibitem{KML} Krishnamoorthy, A., Manikandan, R. \& Lakshmy, B. (2015). {A revisit to queueing-inventory system with positive service time}. \textit{Annals of Operations Research}, 233, 221-236.
	
	\bibitem{KNDV}Krishnamoorthy, A., Narayanan, V. C., Deepak, T. G. \& Vineetha, K. (2006b). {Effective utilization of server idle time in an $(s, S)$ inventory with positive service time}. \textit{Journal of Applied Mathematics and Stochastic Analysis}. doi:10.1155/JAMSA/2006/69068.
	
	\bibitem{KN} Krishnamoorthy, A. \& Narayanan, V. C. (2013). {Stochastic decomposition in production inventory with service time}. \textit{European Journal of Operational Research}, 228(2), 358–366.
	
	
	
	\bibitem{KP1}  Kumar, K.\ S. \& Pal, C. (2013). Risk-sensitive control of jump process on denumerable state space with near monotone cost. \textit{Applied Mathematics and Optimization}, 68, 311-331.
	
	\bibitem{KP2}  Kumar K.\ S. \& Pal C. (2015). {Risk-sensitive control of continuous-time Markov processes with denumerable state space}. \textit{ Stochastic Analysis and Applications}, 33, 863-881. https://doi.org/10.1080/07362994.2015.1050674.
	
		\bibitem{MS}  Malini, S. \& Shajin, D. (2020). {An $(s,S)$ Production Inventory System with state Dependent Production rate and Lost Sales}. \textit{Appliled Probability and Stochastic Processes}, 215-233.
	
	\bibitem{MM} A. Z. Melikov \& A. A. Molchanov (1992). {Stock optimization in transportation/storage systems}, \textit{Cybernetics and Systems Analysis}, 28(3):484 – 487. 
	
	
	\bibitem{MeynTweedi}  Meyn, S. P. \& Tweedie, R. L. (1993). {Stability of Markovian processes III: Foster-Lyapunov criteria for continuous time processes}. \textit{ Advinced in Applied Probability}, 25, 518-548.
	
		
	\bibitem{N1} Neuts, M. F. (1989). \textit{Structured Stochastic Matrices of M/G/I Type
		and Their Applicaions}. Marcel Dekker, New York.
	
	\bibitem{N}  Neuts, M. F. (1994). \textit{Matrix-Geometric Solutions in Stochastic Models: An Algorithmic Approach} (2nd edn.). New York, Dover.
	
	\bibitem{PP} Pal, C. \& Pradhan, S. (2019). Risk sensitive control of pure jump processes on a general state space. \textit{Stochastics}, 91(2), 155-174. 
	
	\bibitem{PSC} Pal, B., Sena, S. S. \& Chaudhuri, K. (2012). {Three-layer supply chain -A production-inventory model for reworkable items}. \textit{Journal of Applied Mathematics and Computation}, 219, 510-543.
	
	
	\bibitem{PS1} Perera, S. C. \& Sethi, S. P. (2022a). {A survey of stochastic inventory models with fixed costs: Optimality of (s,S) and (s,S)-type policies-the discrete-time case}. \textit{Production and Operations Management}. https://doi.org/10.1111/poms.13820.
	
	\bibitem{PS2} Perera, S. C. \& Sethi, S. P. (2022b). {A survey of stochastic inventory models with fixed costs: Optimality of (s,S) and (s,S)-type policies-the continuous-time case}. \textit{Production and Operations Management}. https://doi.org/10.1111/poms.13819.
	
	\bibitem{PZ} Piunovsky, A. \& Zhang, Y. (2020). \textit{Continuous-Time Markov Decision Processes}. Probability Theory and Stochastic Modelling, Springer International.
	

	\bibitem{P} Puterman, M. L. (1994). \textit{Markov Decision Processes: Discrete Stochastic Dynamic Programming}. Wiley Series in Probability and Statistics, Hoboken, New Jersey. 
		

	\bibitem{SHH}Saffari, M., Haji, R. \& Hassanzadeh, F. (2011). {A queueing system with inventory and mixed exponentially distributed lead times}. \textit{The International Journal of Advanced Manufacturing Technology}, 53, 1231-1237.
	
	\bibitem{S1} Sajeev, S. N. (2012). {On $(s, S)$ inventory policy with/without retrial and interruption of service/production}, Ph.D. thesis, Cochin University of Science and Technology, India.
	
	\bibitem{S} Sarkar, B. (2012). {An inventory model with reliability in an imperfect production process}. \textit{Journal of Applied mathematics and Computation}, 218, 4081-4091.
	
	\bibitem{SSDKS} Schwarz, M., Sauer, C., Daduna, H., Kulik R. \& Szekli, R. (2006). {M/M/1 queueing systems with inventory}. \textit{Queueing Systems}, 54, 55-78.
	
	\bibitem{SWD}Schwarz, M., Wichelhaus, C. \& Daduna, H. (2007). {Product form models for queueing networks with an inventory}. \textit{Stochastic Models}, 23, 627–663.
	
	\bibitem{SS} Sigman, K. \& Simchi-Levi, D. (1992). {Light traffic heuristic for an M/G/1 queue with limited inventory}. \textit{Annals of Operations Research}, 40, 371–380.
	

	\bibitem{VW} Veatch, H. M. \& Wein, M. L. (1994). {Optimal Control of a Two-Station Tandem Production/Inventory System}. \textit{ Operations Research}, 42, 337-350.
	
	\bibitem{ZL}Zhao, N. \& Lian, Z. (2011). {A queueing-inventory system with two classes of customers}. \textit{International Journal of Production Economics}, 129, 225–231.
\end{thebibliography}
\end{document}